\newtheorem{thm}{Theorem}[section]
\newtheorem{cor}[thm]{Corollary}
\newtheorem{lem}[thm]{Lemma}
\newtheorem{prop}[thm]{Proposition}
\newtheorem{exam}[thm]{Example}
\newtheorem{defn}[thm]{Definition}
\theoremstyle{remark}
\newtheorem{rem}{Remark}[section]
 \def\a{{\alpha}} 
 \def\b{{\beta}}
 \def\g{{\gamma}}
 \def\l{{\lambda}}
 \def\s{{\sigma}}
 \def\la{{\langle}}
 \def\ra{{\rangle}}
 \def\CB{{\mathcal B}}
 \def\CA{{\mathcal A}}
 \def\CG{{\mathcal G}}
 \def\CP{{\mathcal P}}     
 \def\CT{{\mathcal T}}
 \def\NN{{\mathbb N}}
 \def\RR{{\mathbb R}}
  \def\SS{{\mathbb S}}
 \def\ZZ{{\mathbb Z}}
\newcommand{\wt}{\widetilde}
\newcommand{\wh}{\widehat}
\begin{document}

\title[Positive cubature rules]
{On positive cubature rules on the simplex and isometric embeddings}
\author{Masanori Sawa}
\address{Graduate School of Information Science\\ Nagoya University\\
  Chikusa-ku, Nagoya 464-8601.}\email{sawa@is.nagoya-u.ac.jp}
\author{Yuan Xu}
\address{Department of Mathematics\\ University of Oregon\\
    Eugene, Oregon 97403-1222.}\email{yuan@math.uoregon.edu}

\begin{abstract}
Positive cubature rules of degree $4$ and $5$ on the $d$-dimensional simplex are constructed 
and used to construct cubature rules of index $8$ or degree $9$ on the unit sphere. The  
latter ones lead to explicit isometric embedding among the classical Banach spaces. Among 
other things, our results include several explicit representations of $(x_1^2+ \ldots + x_d^2)^t$ 
in terms of linear forms of degree $2t$ with rational coefficients for $t = 4$ and $5$. 
\end{abstract}

\date{\today}
\keywords{Cubature rule, simplex, sphere, isometric embedding, spherical design}
\subjclass[2000]{46B04, 65D32}
\thanks{The work was supported in part by NSF Grant DMS-1106113}

\maketitle

\section{Introduction}
\setcounter{equation}{0}

A cubature rule of degree $n$ on the $d$-dimensional simplex is a finite sum of  linear combination of function 
evaluations that approximates the integral on the simplex, which becomes exact if the integrand is any 
polynomial of degree at most $n$. We are interested in constructing  positive cubature rules on the simplex
$$
   T^d := \{x \in \RR^{d+1}: x_1 \ge 0, \ldots, x_d\ge 0, 1-x_1-\ldots-x_d \ge 0\},
$$ 
where positive means that the coefficients in the linear combination are all positive numbers. Such objects are traditionally 
studied in the context of numerical analysis. They are, however,  closely related, even equivalent under mild restriction, to 
cubature rules on the unit sphere $\SS^d$ of $\RR^{d+1}$, which are intimately connected 
to several other topics in mathematics, such as spherical design and isometric embedding of the classical finite
dimensional Banach  space \cite{GS, Lyubich-Vaserstein}.

Our interests in this topic originally stem from a search for cubature rules on high dimensional sphere in the literature, which
turned up only a couple of rules of degree 9. Since an invariant cubature rule of degree $9$ on the sphere is equivalent to 
a cubature rule of degree $4$ for the Chebyshev weight function on the simplex, we then looked for cubature rules on the simplex 
and found, to our surprise, no positive cubature rules of degree 4, albeit for the unit weight function, in higher dimensions 
\cite{DW, Stroud, St76}. It turns out that there is a reason for this omission: most of the construction in the literature relies on 
choosing the points in advance and the points being chosen, devoid of other reasons, possess simple structures, such as
centroid $(\frac{1}{d+1},\ldots, \frac{1}{d+1})$ and points in the form $(r,\ldots, r, 0, \ldots, 0)$ or in its orbit under the symmetric 
group of the simplex. However, such choices, as we shall prove in Section 3, do not lead to positive cubature rules of degree 4 or higher. 
Adding another point of a different nature, we then look for a practical way of constructing positive cubature rules. The most 
straightforward way of constructing a cubature rule of degree $n$ is to solve the moment equations that match the 
integral and the cubature sum for all polynomials up to degree $n$. A well known theorem of Sobolev  \cite{Sobolev} states 
that if the cubature rule is symmetric under certain finite group, then only polynomials invariant under the same group need 
to be considered. Restricting to invariant polynomials reduces the size of the equations considerably. The difficulty, however, 
lies in that the moment equations are nonlinear in the coordinates of the points. Such nonlinear equations can be solved mostly only by numerical means and for cubature rules of moderate degrees in lower dimensions; see, for example,
\cite{ZCL} for the latest effort in this direction. By 
adding only one point of the form $u_a = (a,b,\ldots,b,0,\ldots,0)$, we shall provide a method of constructing positive 
cubature rules without solving the equations numerically. 

The positive cubature rules that we found are of degree $4$ and $5$ on the simplex and degree 9 and 11 on the sphere,
from which cubature rules of the same degree but with far less number of points can be deduced from a method due to 
Victoir \cite{Victoir} that uses orthogonal arrays and block designs.  Even after the deduction, however, the number of nodes
for our cubature rules can still be fairly large comparing to non-positive cubature rules. If the positivity is not required, then 
there exist cubature rules of remarkably small number of points of higher degrees for all dimensions \cite{GM, HX}. For  
simplex and sphere of higher dimensions, the positivity exerts a high demand that is not easy to meet. Besides the numerical
stability that they automatically bring, the positive cubature rules are often useful in other context. The cubature rules of 
degree $2 t+1$ on the sphere  with central symmetry are equivalent to cubature rules of index $2t$ on the sphere, which give, when they are positive, 
isometric embeddings between classical Banach space $l_2^d \mapsto l_{2t}^N$. Moreover, some of our solutions are 
rational, meaning that the coefficients are all rational and $a$ is a square of an integer, which lead to representations of 
$(x_1^2+\ldots +x_d^2)^t$ as a sum of real linear forms of power $2t$ for $t = 4,5$ with rational coefficients. 
Such representations have implications in number theory. 
 
The paper is organized as follows. The next section is preliminary, where we sum up the background on cubature rules
and their connection to other topics. The non-existence of the positive cubature rules for simple choice of points and our 
method of constructing positive cubature rules are given in Section 3. The method is applied to integrals with respect to the constant weight function on the simplex in Section 4 and to Chebyshev weight function, which connects to cubature rules on the sphere, in Section 5.

\section{Preliminaries}
\setcounter{equation}{0}

\subsection{Cubature rules on the simplex and on the sphere}
For $x \in T^d$, define $|x|_1= x_1+\ldots + x_d$. For $\g > -1$ and $x \in T^d$, define 
$$
    W_\g(x) = \left( x_1 \ldots x_d (1- |x|_1) \right)^{\g}, \qquad x \in  T^d. 
$$ 
Let $w_\g$ be the normalization constant of $W_\g$ defined by 
$$
    w_\g = 1 \Big /  \int_{T^d} W_\g(x) dx  = \frac{\Gamma( (d+1)(\g+1) )}{[\Gamma(\g+1) ]^{d+1} }.
$$
Let $\Pi_n^d$ denote the space of polynomials of degree at most $n$ in $d$ variables. 
A cubature rule of degree $n$ for the weight function $W_\g$ is a finite sum of function 
evaluations such that 
\begin{equation} \label{cuba-Td}
       w_\g \int_{T^d} f(x) W_\g(x) dx  = \sum_{k=1}^N \l_k f(x_k), \quad \forall f \in \Pi_n^d, 
\end{equation}
and there is a function $f^* \in \Pi_{n+1}^d $ for which the above equation fails to hold.
We are particularly interested in cubature rules that have all nodes inside $T^d$, i.e.,
$x_k \in T^d$ and that are positive, meaning that all $\l_k >0$.  

The simplex $T^d$ is invariant under the permutations of its vertices. To capture the symmetry, it is often easier 
to work with homogeneous coordinates, $\xi \in \RR^{d+1}$ with $|\xi|_1 =1$, under which the simplex 
$T^d$ is equivalent to 
$$
     \CT^{d+1} := \{\xi \in \RR^{d+1}: \xi_1 \ge 0, \ldots, \xi_{d+1} \ge 0, |\xi|_1 =1\}.
$$
Under homogeneous coordinates, $W_\g(x)$ becomes $\xi^\g$ and we have 
\begin{equation} \label{Td-Td}
   \int_{\CT^{d+1}} f(\xi) \xi^\gamma d\xi = \int_{T^d} f(x, 1-|x|_1) W_\g(x) dx. 
\end{equation}
Working with $\CT^{d+1}$ allows us look at cubature rules of a particular form.  

\begin{defn} \label{defn:index}
Let $\CP_n^{d+1}$ denote the space of homogeneous polynomials of degree $n$ in $\RR^{d+1}$. 
A cubature rule is of index $n$ (in contrast to degree $n$) if it is exact for all polynomials in $\CP_n^{d+1}$. 
\end{defn}

\begin{prop}
\label{prop:cuba-CTd}
The cubature rule \eqref{cuba-Td} with all nodes inside $T^d$ is equivalent to
a cubature rule of index $n$ for the integral against $\xi^\g$ on $\CT^{d+1}$:
\begin{equation} \label{cuba-CTd}
    w_\g \int_{\CT^{d+1}} f(\xi) \xi^\g d\xi  = \sum_{k=1}^N \l_k f(\xi_k), \qquad \forall f \in \CP_n^{d+1}, 
\end{equation}
where $\xi_k$ and $x_k$ are related through $\xi_k=(x_k, 1-|x_k|_1) \in \CT^{d+1}$.
\end{prop}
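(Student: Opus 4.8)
The plan is to exhibit an explicit linear isomorphism between $\Pi_n^d$ and $\CP_n^{d+1}$ under which both the weighted integral on $T^d$ and the cubature sum are carried to their counterparts on $\CT^{d+1}$, so that exactness of one rule on its polynomial space is literally exactness of the other. The whole proof is then bookkeeping organized around this isomorphism together with the identity \eqref{Td-Td}.

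First I would introduce the dehomogenization map $R\colon \CP_n^{d+1} \to \Pi_n^d$ by restriction to the hyperplane $|\xi|_1 = 1$, namely $(RF)(x) = F(x_1, \ldots, x_d, 1 - |x|_1)$ for $F \in \CP_n^{d+1}$; since each degree-$n$ monomial $\xi^\alpha$ becomes a polynomial of degree at most $n$ in $x$, this lands in $\Pi_n^d$. Its candidate inverse is the homogenization $H\colon \Pi_n^d \to \CP_n^{d+1}$ given by $(Hf)(\xi) = |\xi|_1^n \, f(\xi_1/|\xi|_1, \ldots, \xi_d/|\xi|_1)$, which is homogeneous of degree $n$. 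I would verify that $R$ and $H$ are mutually inverse: $H$ followed by $R$ returns $f$ because $|\xi|_1 = 1$ on the hyperplane, while $R$ followed by $H$ returns $F$ after using $1 - (\xi_1 + \ldots + \xi_d)/|\xi|_1 = \xi_{d+1}/|\xi|_1$ together with the homogeneity of $F$. The dimension count $\dim \Pi_n^d = \binom{n+d}{d} = \dim \CP_n^{d+1}$ corroborates that these are linear isomorphisms.

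Next I would transport the two ingredients of the cubature identity across this isomorphism. The integral identity \eqref{Td-Td} says precisely that $\int_{\CT^{d+1}} F(\xi)\,\xi^\g\,d\xi = \int_{T^d} (RF)(x)\,W_\g(x)\,dx$, so the left-hand sides of \eqref{cuba-CTd} and \eqref{cuba-Td} correspond under $F = Hf$. For the sums, the node correspondence $\xi_k = (x_k, 1 - |x_k|_1)$ places every $\xi_k$ on the hyperplane $|\xi|_1 = 1$, whence $F(\xi_k) = (RF)(x_k)$ for all $F \in \CP_n^{d+1}$; with the same weights $\l_k$ the two cubature sums coincide term by term. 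Writing $f = RF$, the assertion ``\eqref{cuba-CTd} holds for all $F \in \CP_n^{d+1}$'' thus reads ``\eqref{cuba-Td} holds for all $f \in \Pi_n^d$'', and conversely via $F = Hf$; this is the claimed equivalence. The hypothesis that the nodes lie inside $T^d$ enters only to guarantee $\xi_k \in \CT^{d+1}$, so that the transported rule again has all its nodes in the domain.

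I do not expect a genuine obstacle here, as the content reduces to the bijection and the transport of integral and sum. The one point deserving care is that $F$ and $RF$ agree only on $\{|\xi|_1 = 1\}$ and not as polynomials on all of $\RR^{d+1}$, so the argument must invoke the relation $\xi_k = (x_k, 1 - |x_k|_1)$ precisely to evaluate on that hyperplane; off the hyperplane the two cubature sums need not match, which is exactly why the node correspondence, and not an arbitrary placement, is what makes the equivalence work.
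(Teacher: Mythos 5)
Your proof is correct and follows essentially the same route as the paper: the paper's forward direction is your restriction map $R$ (dehomogenize by setting $\xi_{d+1}=1-|x|_1$) and its converse is your homogenization $H$ (multiply a degree-$m$ piece by $|\xi|_1^{n-m}$), combined with \eqref{Td-Td} and the node correspondence. You merely package the two directions as an explicit pair of mutually inverse linear isomorphisms, which is the same argument in slightly tidier form.
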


\begin{proof}
Every
homogeneous polynomial $F \in \CP_n^{d+1}$ can be written as 
$$
    F(x_1,\ldots, x_{d+1})
= \sum_{j=0}^n f_j(x_1,\ldots,x_d) x_{d+1}^{n-j} \quad \hbox{with} \quad f_j \in \Pi_j^d, 
$$
which implies, in particular, that $f(x) := F(x_1,\ldots, x_d, 1-|x|_1) \in \Pi_n^d$.
Indeed, applying \eqref{cuba-Td} to such $f$ gives, by \eqref{Td-Td},
the cubature rule \eqref{cuba-CTd}. On the other hand,
for each $f \in \Pi_m^d$ with $0 \le m \le n$,
applying \eqref{cuba-CTd} to the homogeneous polynomials $f(x_1,\ldots,x_d)|x|_1^{n-m} \in \CP_n^{d+1}$
gives \eqref{cuba-Td}. 
\end{proof}

Switching from \eqref{cuba-Td} to \eqref{cuba-CTd} has one advantage, namely, that
we can disregard the restriction of
$\xi_k \in \CT^{d+1}$. Indeed, \eqref{cuba-CTd} is equivalent to 
\begin{equation*} 
    w_\g \int_{\CT^{d+1}} f(\xi) \xi^\g d\xi  = \sum_{k=1}^N \l_k^*  f(\xi_k/ r_k ), \qquad \forall f \in \CP_n^{d+1}, 
\end{equation*}
where $\lambda^*= \lambda_k r_k^n$ for any $r_k > 0$. Therefore we can always rescale so that the points 
are inside $\CT^{d+1}$. This fact is especially useful for constructing cubature rules of lower degree 
in high dimensions, as will be seen in the following sections. For $\alpha \in \NN_0^{d+1}$,
the moment $m_\a^{(\g)}$ with respect to $\xi^\g$ on $\CT^{d+1}$ is given by 
\begin{equation}\label{moment-g}
     m_\a^{(\g)}: = w_\g \int_{\CT^{d+1}} \xi^ {\a + \g} d \xi
  = \frac{ (\g+1)_{\a_1}\cdots (\g+1)_{\a_{d+1} }}
          {(( \g +1)(d+1))_{
|\a|_1 }},
\end{equation}
where $(a)_m = :a(a+1)\ldots (a+m-1)$.

We are interested in two integrals on the simplex, corresponding to $\gamma = 0$ and $\gamma = -1/2$, 
with respect to the weight functions 
$$
    W_0(x) =1 \quad \hbox{and}\quad  W_{-1/2}(x) = 1/\sqrt{x_1 \ldots x_d (1-|x|_1)},
$$
respectively. The first integral is traditionally studied in numerical analysis, whereas the second one
is of interest because a cubature rule for this weight function corresponds to a cubature rule for the 
surface measure $\s$ on the sphere $\SS^d$. In fact, the cubature rule \eqref{cuba-Td} of 
degree $n$ is equivalent to a cubature rule of degree $2n-1$ on the sphere,
$$
     \s_d  \int_{\SS^d} f(x) d \sigma = \sum_{k=1}^M \mu_k f(y_k), \quad \forall f \in \Pi_{2n-1} (\SS^d),
$$
where $\Pi_n(\SS^d)$ denotes the space of polynomials of degree at most $n$ in $d+1$ variables
and $\s_d  = w_{-1/2}/2$ is the surface area of $\SS^d$. 
To state this relation  \cite{X98} more precisely,  for  $x \in \RR^{d+1}$, let $\tau(x)$ denote the number 
of non-zero components of $x$. For $x= (x_1,\ldots, x_d) \in T^d$, let 
$$
     X = (x_1,\ldots, x_d, 1-|x|_1) \quad\hbox{and}\quad  \sqrt{X} = (\sqrt{x_1},\ldots, \sqrt{x_d}, \sqrt{1-|x|_1}). 
$$

\begin{prop}
\label{prop:cuba-SSd0}
If the cubature rule \eqref{cuba-Td} is of degree $n$ and it has all nodes $x_k \in T^d$, then 
\begin{equation*}
  \frac{w_\g}{2} \int_{\SS^d} f(x) \prod_{i=1}^d x_i^{2 \gamma +1} d\s(x) =  \sum_{k=1}^M 
      \frac{\lambda_k}{2^{\tau(u_k)}} \sum_{ g\in \ZZ_2^{d+1}} f( u_k g), \quad \forall f \in \Pi_{2n+1}(\SS^d), 
\end{equation*}
where $u_k = \sqrt{X_k}$. Furthermore, this relation is reversible. 
\end{prop}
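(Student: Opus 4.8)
The plan is to reduce the sphere integral to an integral over the simplex by exploiting the $\ZZ_2^{d+1}$-symmetry built into the right-hand side, and then to apply the given cubature rule \eqref{cuba-Td}. First I would parametrize the sphere $\SS^d$ by the simplex $T^d$ through the squaring map. Writing a point $y \in \SS^d$ in the positive orthant as $y = (\sqrt{x_1}, \ldots, \sqrt{x_d}, \sqrt{1-|x|_1}) = \sqrt{X}$ with $x \in T^d$, the coordinates satisfy $y_i^2 = x_i$ for $1 \le i \le d$ and $y_{d+1}^2 = 1-|x|_1$, so that $\sum_i y_i^2 = 1$ automatically. The key computation is the change of variables: I would compute the Jacobian of $x \mapsto \sqrt{X}$ and verify that the surface measure $d\s$ on the positive orthant of $\SS^d$ pulls back, up to the normalizing constants, to $\bigl(x_1 \cdots x_d (1-|x|_1)\bigr)^{-1/2} dx = W_{-1/2}(x)\,dx$. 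Combined with the extra factor $\prod_i x_i^{2\g+1} = \prod_i y_i^{2(2\g+1)}$ on the left-hand side, this matches the weight $\xi^\g$ appearing in \eqref{cuba-Td} after one identifies $x_i = y_i^2$; this is where the exponent bookkeeping must be done carefully.

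Next I would handle the symmetrization over the sign group. Since the integrand on the left, $f(y)\prod_i y_i^{2\g+1}$, is not sign-invariant in general, I would split $f$ into its parts according to the parity of each coordinate. The only monomials $y^\a$ that survive integration against the even factor $\prod_i y_i^{2\g+1}\,d\s$ over the full sphere are those for which $y^\a \prod_i y_i^{2\g+1}$ is even in every coordinate, equivalently those with $y^\a$ even in every variable; for such monomials the integral over $\SS^d$ is $2^{d+1}$ times the integral over the positive orthant. On the cubature side, averaging $f(u_k g)$ over $g \in \ZZ_2^{d+1}$ and dividing by $2^{\tau(u_k)}$ likewise projects $f$ onto its fully-even part and records each distinct sign-image of $u_k$ exactly once. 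Thus both sides depend only on the even part of $f$, and I would reduce the identity to the case where $f$ is a product of even powers, i.e.\ $f(y) = g(y_1^2,\ldots,y_{d+1}^2)$ for some polynomial $g$.

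With $f$ even in each variable, the substitution $x_i = y_i^2$ turns $f$ into a polynomial in $x = (x_1, \ldots, x_d)$ (after using $x_{d+1} = 1-|x|_1$), and an even polynomial of degree $2n+1$ on $\SS^d$ becomes a polynomial of degree at most $n$ on $T^d$, so it lies in $\Pi_n^d$. At this point \eqref{cuba-Td} applies directly: the simplex cubature reproduces the integral exactly, and unwinding the change of variables and the symmetrization on both sides yields the claimed sphere rule. The main obstacle I anticipate is the bookkeeping in the last two steps, specifically verifying that the factor $2^{\tau(u_k)}$ in the denominator exactly compensates for the degeneracy of the orbit $\{u_k g : g \in \ZZ_2^{d+1}\}$ when $u_k$ has zero components (a zero coordinate is fixed by the corresponding sign flip, collapsing the orbit size from $2^{d+1}$ to $2^{\tau(u_k)}$) while simultaneously matching the $2^{d+1}$ factor from passing between the full sphere and its positive orthant. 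For the reverse direction I would simply run the same change of variables backwards: given the sphere rule, any $f \in \Pi_n^d$ is lifted to an even polynomial on $\SS^d$ via $x_i \mapsto y_i^2$, and the sphere cubature then produces \eqref{cuba-Td}.
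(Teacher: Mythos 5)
The paper offers no proof of Proposition~\ref{prop:cuba-SSd0} --- it is quoted from \cite{X98} --- and your outline is precisely the standard argument behind that citation: the squaring map from $T^d$ onto the positive orthant of $\SS^d$, the pullback of $d\s$ to a constant times $\left(x_1\cdots x_d(1-|x|_1)\right)^{-1/2}dx$, the parity decomposition showing both sides see only the part of $f$ that is even in every coordinate, the degree count (a fully even polynomial of degree $\le 2n+1$ has degree $\le 2n$ and descends to $\Pi_n^d$), and the observation that the $\ZZ_2^{d+1}$-orbit of $u_k$ has exactly $2^{\tau(u_k)}$ distinct points, so that $2^{-\tau(u_k)}$ times the sum over distinct sign-images is the sign-average of $f$ at $u_k$ (the normalization forces the sum to be read over the orbit as a set, as in Proposition~\ref{prop:cuba-Victoir}(ii)). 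The reverse direction by lifting $f\in\Pi_n^d$ to the even polynomial $f(y_1^2,\ldots,y_d^2)$ is also as you say.

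One step is wrong as written, and it is exactly the one you flagged as delicate. In ``$\prod_i x_i^{2\g+1}=\prod_i y_i^{2(2\g+1)}$'' you have read the $x_i$ in the proposition's weight as simplex coordinates; they are sphere coordinates ($x$ is the integration variable on $\SS^d$ in the statement). In your notation the sphere weight is $\prod_i |y_i|^{2\g+1}=\prod_i x_i^{\g+1/2}$, and the product must run over all $d+1$ coordinates (the upper limit $d$ in the statement is a slip --- compare the weight $\prod_{i=1}^{d+1}|x_i|^{2\g+1}\,d\sigma$ used in the proof of Theorem~\ref{thm:cuba-Main1}). Only then does multiplying by the Jacobian factor $\prod_{i=1}^{d+1}x_i^{-1/2}$ yield $\prod_{i=1}^{d+1}x_i^{\g}=W_\g$ in homogeneous coordinates, so that \eqref{cuba-Td} applies; with the exponent $2(2\g+1)$ you would instead land on the weight $W_{2\g+1/2}$, for which the given cubature rule says nothing. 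With that exponent corrected, the rest of your outline goes through.
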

 
A subset $X$ of $\RR^{d+1}$ is said to be {\it podal}, if for any distinct $x, x' \in X$, $x \ne -x'$.
We denote $-X = \{(-x_1, \ldots, -x_{d+1}) : (x_1, \ldots, x_{d+1}) \in X\}$. The set $X \cup (-X)$ is an 
{\it antipodal} subset of $\RR^{d+1}$. The following is a slight generalization of Proposition 4.3 of~\cite{Lyubich-Vaserstein},
which follows immediately from Propositions \ref{prop:cuba-CTd} and \ref{prop:cuba-SSd0}.

\begin{cor}
\label{cor:cuba-SSd1}
Let $\{x_1, \ldots, x_{N/2}\}$ be a podal subset of $\SS^d$.
If $x_k$ and $\lambda_k$ define
a cubature rule of index $2t$ on $\SS^d$, then 
\begin{equation} \label{cuba-SSd}
   \s_d \int_{\SS^d} f(x) d\s(x) =  \sum_{k=1}^{N/2} 
      \frac{\lambda_k}{2} \{f(x_k) + f(-x_k)\}, \quad \forall f \in \Pi_{2t+1}(\SS^d).
\end{equation}
Conversely,
any centrally symmetric cubature rule of degree $2t+1$ on $\SS^d$ with nodes $\pm x_k$
can be reduced to a cubature rule of index $2t$ on $\SS^d$ with nodes $x_k$.
\end{cor}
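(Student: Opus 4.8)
The plan is to prove both directions by reducing everything to the parity of homogeneous polynomials under the antipodal map $x \mapsto -x$, which makes the statement independent of the simplex (although it also follows by combining Propositions~\ref{prop:cuba-CTd} and \ref{prop:cuba-SSd0}). The one structural fact I would record first is that, as functions on $\SS^d$, every $f \in \Pi_{2t+1}(\SS^d)$ decomposes as $f = P + Q$ with $P \in \CP_{2t}^{d+1}$ and $Q \in \CP_{2t+1}^{d+1}$. To see this, split $f$ into its even- and odd-degree parts and homogenize each monomial of degree $m$ by multiplying it by $|x|^{2t-m}$ in the even part and by $|x|^{2t+1-m}$ in the odd part; in both cases the exponent is a nonnegative even integer, so since $|x|^2=1$ on $\SS^d$ this leaves the value of $f$ unchanged while turning the even part into a homogeneous $P$ of degree $2t$ and the odd part into a homogeneous $Q$ of degree $2t+1$. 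Homogeneity then gives $P(-x)=P(x)$ and $Q(-x)=-Q(x)$; in particular $\int_{\SS^d} Q\, d\s = 0$ by the antipodal invariance of $\s$, so that $\s_d \int_{\SS^d} f\, d\s = \s_d \int_{\SS^d} P\, d\s$.

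For the forward direction I would take the symmetrized sum on the right of \eqref{cuba-SSd}, substitute $f = P+Q$, and apply the two parity relations: the contribution at each node is $\tfrac{\l_k}{2}\{P(x_k)+Q(x_k)+P(x_k)-Q(x_k)\} = \l_k P(x_k)$, so the sum collapses to $\sum_{k=1}^{N/2}\l_k P(x_k)$. Since $P \in \CP_{2t}^{d+1}$ and $\{x_k,\l_k\}$ form an index-$2t$ rule, this equals $\s_d\int_{\SS^d}P\,d\s$, which by the first paragraph equals $\s_d\int_{\SS^d}f\,d\s$. Hence \eqref{cuba-SSd} holds for every $f\in\Pi_{2t+1}(\SS^d)$.

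For the converse, suppose the pairs $\pm x_k$ with common weight $\mu_k$ define a centrally symmetric degree-$(2t+1)$ rule, and set $\l_k := 2\mu_k$. Given any $g \in \CP_{2t}^{d+1}$, which is a polynomial of degree $2t \le 2t+1$ and is even, applying the degree-$(2t+1)$ rule to $g$ and using $g(-x_k)=g(x_k)$ yields $\s_d\int_{\SS^d}g\,d\s = \sum_k \mu_k\{g(x_k)+g(-x_k)\} = \sum_k 2\mu_k\,g(x_k) = \sum_k \l_k\,g(x_k)$, which is exactly the index-$2t$ property for the reduced node set $\{x_k\}$ with weights $\l_k$.

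I expect no serious obstacle: the argument is essentially bookkeeping of parities. The only step needing care is the homogenization — checking that $f = P+Q$ is an identity of functions on $\SS^d$ (not of polynomials on $\RR^{d+1}$), that the two homogeneous degrees land exactly at $2t$ and $2t+1$, and that recording $\int_{\SS^d}Q\,d\s=0$ lets the even part alone reproduce the full integral.
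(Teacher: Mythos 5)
Your proof is correct. Note, however, that the paper offers no written argument for this corollary at all: it simply asserts that the statement ``follows immediately from Propositions~\ref{prop:cuba-CTd} and \ref{prop:cuba-SSd0},'' i.e.\ from the simplex--sphere correspondence. Your route is the direct one (essentially the argument behind Proposition~4.3 of Lyubich--Vaserstein): decompose $f\in\Pi_{2t+1}(\SS^d)$ on the sphere as $f=P+Q$ with $P\in\CP_{2t}^{d+1}$ even and $Q\in\CP_{2t+1}^{d+1}$ odd, kill $Q$ by symmetrization at the nodes and by antipodal invariance of $\s$ in the integral, and apply the index-$2t$ rule to $P$; the converse is just evaluation of the degree-$(2t+1)$ rule on even polynomials. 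This has two advantages over the paper's citation: it is self-contained, and it actually matches the generality of the statement, which concerns an \emph{arbitrary} podal subset of $\SS^d$, whereas the route through Propositions~\ref{prop:cuba-CTd} and \ref{prop:cuba-SSd0} most naturally applies to node sets of the special $\ZZ_2^{d+1}$-orbit form $\sqrt{X_k}\,g$ coming from the simplex. The only points needing care --- that $f=P+Q$ is an identity of functions on $\SS^d$ obtained by homogenizing with even powers of $|x|$, and that the homogeneous degrees land exactly at $2t$ and $2t+1$ --- are exactly the ones you flag and handle correctly.
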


\subsection{Invariant cubature rules}

Let $\CG$ be a group of bijective linear transformations on $\RR^{d+1}$.
Let $\Omega$ be a $\CG$-invariant subset of $\RR^{d+1}$ and
$\mu$ be a $\CG$-invariant measure $\mu$ on $\Omega$.
For a point $x \in \RR^{d+1}$,
we denote the $\CG$-orbit of $x$ by $x \CG$.
A function $f$ on $\Omega$ is {\it $\CG$-invariant} if
for any $g \in \CG$ and $x \in \Omega$, $f(xg) = f(x)$.
A cubature rule with nodes $x_k$ and weights $\lambda_k$,
with respect to $\mu$,
is {\it $\CG$-invariant} if $x_k$ are partitioned into $\CG$-orbits, and
$x_k g = x_{k'}$ implies $\lambda_k = \lambda_{k'}$.

We are particularly interested in the group $\CA_d$
of permutations of the axes of $\RR^{d+1}$.
The group $\CA_d$ is the symmetric group $S_{d+1}$, and
an $\CA_d$-invariant function on $\RR^{d+1}$ is a 
symmetric function of $x_1,\ldots, x_{d+1}$.
We also deal with the Weyl group $\CB_{d+1}$ of type $B$.
This group contains a subgroup of reflections of the axes,
which is isomorphic to the elementary $2$-group $\ZZ_2^{d+1}$.
Throughout this paper we consistently use these notations.

The weight function $W_\g$ is invariant under any permutation of the $d+1$ vertices of the simplex $T^d$,
fixing the origin. This means that $T^d$ is $\CA_d$-invariant in the homogeneous coordinates of $\CT^{d+1}$.
A {\it symmetric cubature rule for the weight function $\xi^\g: = \xi_1^\g \cdots \xi_{d+1}^\g$} is of the form
\begin{equation}
\label{symmCuba}
\int_{\CT^{d+1}} f(\xi) \xi^\g d\xi = \sum_{k=1}^M \frac{\lambda_k}{| x_k \CA_d|}
\sum_{x \in  x_k\CA_d} f(x),
\end{equation} 
where $|x_k \CA_d|$ denotes the cardinality (the number of distinct elements) of the orbit $x_k \CA_d$.
It is obvious that the number of nodes of (\ref{symmCuba}) is $\sum_{k=1}^M |x_k\CA_d|$.
According to a well-known theorem of Sobolev~\cite{Sobolev},
a symmetric cubature rule of index $n$ holds if and only if
(\ref{symmCuba}) holds for every $f \in (\CP_n^{d+1})^{\CA_d}$, where $(\CP_n^{d+1})^{\CA_d}$ denotes
the space of homogeneous polynomials invariant under $\CA_d$.
The dimension of this space is given by the partition number
$$
 \dim (\CP_n^{d+1})^{\CA_d}
= \mathrm{card} \left \{ \b \in \NN^{d+1}: |\b| =d, \b_1 \ge \b_2\ge \cdots \ge \b_d \right\}.
$$
For small $n$ this means that we only need to verify a small number of invariant polynomials, which
can possibly be solved by working with the moment equations. 
A basis of the space  $(\CP_n^{d+1})^{\CA_d}$ is given by symmetric polynomials of degree $n$, which
are indexed by partitions. Let $\ell = (\ell_1,\ldots, \ell_r)$ be a partition, that is, $\ell_1 \ge \ell_2 \ge ...\ge \ell_r >0$, 
$\ell_i \in \NN$. We denote the symmetric homogeneous polynomial associated to $\ell$ by 
$$
  S_{\{ \ell \}}(x) = \mathrm{sym}(x_1^{\ell_1} \cdots x_r^{\ell_r}).  
$$

\subsection{Isometric embeddings of Banach spaces}

Let $p$ be a positive integer with $p \ne \infty$.
We define the norm $|\cdot |_p : \mathbb{R}^d \rightarrow \mathbb{R}$ by
\begin{equation*}
|x |_p = \bigg(\sum_{i=1}^d |x_i|^p \bigg)^{1/p}.
\end{equation*}
The Euclidean space $\mathbb{R}^d$ endowed with the norm $|\cdot |_p$
is a classical finite-dimensional Banach space, usually denoted by $l_p^d$.

Given two Banach spaces $l_p^d, l_{p'}^{d'}$, a classical problem in the theory of Banach spaces 
asks when there exists an $\mathbb{R}$-linear map $F : l_p^d \longrightarrow l_{p'}^{d'}$ such that
$$
        | F(x) |_{p'} = | x |_p \quad \text{for every ${\bf x} \in l_p^d$}.
$$
Such a map $F$ is called an {\it isometric embedding} from $l_p^d$ to $l_{p'}^{d'}$.
To exclude trivial cases, we assume $p' \ge p \ge 2$ and $p \ne p'$.
It is well known~\cite[Theorem 1.1]{Lyubich-Vaserstein} that if $p, p' \ne \infty$ and an 
isometric embedding from $l_p^d$ to $l_{p'}^{d'}$ exists, then $p = 2$ and $q$ is an even integer.
Hereafter we only consider the case $p = 2$, $p' = 2t < \infty$.

\begin{thm}
\label{thm:LV93}
The following are equivalent.
\begin{enumerate}
\item[(i)] There exists a cubature rule of index $2t$ on $S^{d-1}$ with $N$ points;
\item[(ii)] There exists an isometric embedding $l_2^d \longrightarrow l_{2t}^N$;
\item[(iii)] There exist $N$ vectors $r_1, \cdots, r_N \in \mathbb{R}^d$ such that
for any $x \in \mathbb{R}^d$,
\begin{equation}
\label{eq:iso}
\sum_{i=1}^N \langle x, r_i \rangle^{2t}
= \langle x, x \rangle^{ t},
\end{equation}
where $\langle \cdot, \cdot \rangle$ denotes the usual inner product.
\end{enumerate}
\end{thm}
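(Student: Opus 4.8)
The plan is to treat (iii) as the central statement and prove the two equivalences (ii) $\Leftrightarrow$ (iii) and (i) $\Leftrightarrow$ (iii) separately; since each involves (iii), this establishes that all three assertions are equivalent.

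First I would settle (ii) $\Leftrightarrow$ (iii), which amounts to unwinding the definitions. Any $\RR$-linear map $F \colon \RR^d \to \RR^N$ is determined by its coordinate functionals, and since every linear functional on $\RR^d$ is an inner product, we may write $F(x) = (\la x, r_1\ra, \ldots, \la x, r_N\ra)$ for unique $r_1, \ldots, r_N \in \RR^d$. Because $2t$ is even, $|\la x, r_i\ra|^{2t} = \la x, r_i\ra^{2t}$, so $|F(x)|_{2t}^{2t} = \sum_{i=1}^N \la x, r_i\ra^{2t}$, while $|x|_2^{2t} = \la x, x\ra^t$. Raising the isometry identity $|F(x)|_{2t} = |x|_2$ to the power $2t$ then shows that $F$ is an isometric embedding if and only if the vectors $r_i$ satisfy \eqref{eq:iso}, which gives both directions at once.

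The content lies in (i) $\Leftrightarrow$ (iii). For fixed $x \in \RR^d$ the function $\theta \mapsto \la x, \theta\ra^{2t}$ is a homogeneous polynomial of degree $2t$, so a cubature rule of index $2t$ with nodes $y_i \in \SS^{d-1}$ and weights $\mu_i$ integrates it exactly:
\[
  \int_{\SS^{d-1}} \la x, \theta\ra^{2t}\, d\s(\theta) = \sum_{i=1}^N \mu_i \la x, y_i\ra^{2t}.
\]
By the rotation invariance of $\s$, the left-hand side equals $c_{d,t}\, \la x, x\ra^t$ with $c_{d,t} = \int_{\SS^{d-1}} \theta_1^{2t}\, d\s(\theta) > 0$ (specialize $x$ to a coordinate axis). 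Putting $r_i := (\mu_i/c_{d,t})^{1/(2t)} y_i$ then yields \eqref{eq:iso}, proving (i) $\Rightarrow$ (iii). Conversely, given $r_i$ satisfying \eqref{eq:iso}, I would drop any $r_i = 0$, write $r_i = |r_i|\, y_i$ with $y_i \in \SS^{d-1}$, and set $\mu_i := c_{d,t}\, |r_i|^{2t} > 0$; the displayed identity then holds for all $x$, and in particular the resulting rule comes out automatically positive.

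The step requiring care --- and the crux of the argument --- is upgrading exactness against the special polynomials $\la x, \cdot\ra^{2t}$ to exactness against every homogeneous polynomial of degree $2t$, as the definition of index $2t$ demands. This rests on the classical fact that the powers of linear forms $\{\la x, \cdot\ra^{2t} : x \in \RR^d\}$ span the space $\CP_{2t}^d$ of homogeneous polynomials of degree $2t$ in $d$ variables; concretely, every monomial $\theta^\a$ with $|\a| = 2t$ is recovered from these powers by polarization, i.e.\ by applying suitable constant-coefficient differential operators in the auxiliary variable $x$. Granting this spanning property, exactness for all the power functions is equivalent to exactness for all of $\CP_{2t}^d$, so \eqref{eq:iso} is genuinely equivalent to the index-$2t$ cubature property. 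Combining this with (ii) $\Leftrightarrow$ (iii) completes the proof.
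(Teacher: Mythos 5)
Your argument is correct, but note that the paper does not actually prove this theorem: it attributes the full equivalence to Lyubich--Vaserstein (with (i) $\Leftrightarrow$ (iii) also credited to Reznick) and only records, in the paragraph following the statement, the explicit construction underlying (i) $\Rightarrow$ (ii)/(iii), namely the identity $\sum_i \lambda_i \la x, x_i\ra^{2t} = c_t \la x,x\ra^t$ with $c_t = \s_{d-1}\int_{\SS^{d-1}} y_1^{2t}\,d\s(y)$ and the substitution $r_i = (\lambda_i/c_t)^{1/2t}x_i$. Your forward direction is exactly this construction, and your (ii) $\Leftrightarrow$ (iii) unwinding and your reverse direction (iii) $\Rightarrow$ (i) supply the parts the paper leaves to the references; you also correctly isolate the one nontrivial ingredient, that the powers of linear forms $\la x,\cdot\ra^{2t}$ span $\CP_{2t}^d$, so that exactness on these functions upgrades to exactness on all homogeneous polynomials of degree $2t$. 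Two small points deserve explicit mention. First, the step $r_i := (\mu_i/c_{d,t})^{1/(2t)}y_i$ requires $\mu_i \ge 0$; the theorem is true under the convention (standard in this literature, and the one the paper uses) that a cubature rule of index $2t$ has positive weights, and you should state that you are using it, since the paper's general Definition~2.1 does not impose positivity. Second, discarding the indices with $r_i=0$ in (iii) $\Rightarrow$ (i) produces a rule with at most $N$ nodes rather than exactly $N$; this is harmless (one can read (i) as ``at most $N$ points,'' or observe that a zero coordinate can simply be dropped from the embedding), but it is worth a sentence.
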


This theorem was proved by Lyubich and Vaserstein~\cite{Lyubich-Vaserstein}. 
The equivalence ^^ ^^ (i) $\Leftrightarrow$ (iii)" was also observed by Reznick~\cite{Reznick}
in connection with Waring's problem in number theory.

Given a cubature rule of index $2t$ on $\SS^{d-1}$ with $N$ nodes $x_k$ and weights $\lambda_k$,
we can explicitly construct an isometric embedding $l_2^d \longrightarrow l_{2t}^N$: 
For $f(y) = \langle x, y \rangle^{2t} \in \CP_{2t}^d$,
\begin{equation}
\label{eq:iso2}
\sum_{i=1}^N \lambda_i f(x_i)
= \s_{d-1} \int_{\SS^{d-1}} |\langle x, y \rangle|^{2t} d\s(y) = c_t \langle x, x \rangle^t,
\end{equation}
where
\begin{equation*}
c_t :=\s_{d-1} \int_{\SS^{d-1}} y_1^{2t} d\s(y), \quad y  = (y_1, \ldots, y_d).
\end{equation*}
The mapping
\begin{equation*}
F(x) = (\sqrt[2t]{\lambda_1 / c_t} \langle x, x_1 \rangle, \ldots,
\sqrt[2t]{\lambda_N / c_t} \langle x, x_N \rangle)
\end{equation*}
defines an isometric embedding $l_2^d \longrightarrow l_{2t}^N$. (\ref{eq:iso2}) can be transformed into 
the form \eqref{eq:iso} by setting $r_i  = \sqrt[2t]{\lambda_i/c_t} x_i$.

The value of $c_t$ can be easily evaluated (it goes back to Hilbert~\cite{Hilbert}), 
\begin{equation*}
c_t = \frac{\displaystyle \Gamma ( \tfrac{d}{2}) \Gamma (t + \tfrac12) } {\displaystyle  \Gamma (t+\tfrac{d}{2}) \Gamma (\tfrac 12 )}
     = \frac{(2t-1)!!(d-2)!!}{(d+2t-2)!!},
\end{equation*}
which is of course a rational number. Thus, for example, if there is a $\CB_d$-invariant cubature rule of index $2t$ on $\SS^{d-1}$
such that the weights $\lambda_k$ are rational and the nodes $x_k$ are of the form
$(\tfrac{b_k}{\sqrt{a_k}}, \ldots, \tfrac{b_k}{\sqrt{a_k}}, \tfrac{c_k}{\sqrt{a_k}}, \ldots, \tfrac{c_k}{\sqrt{a_k}})$
with $a_k, b_k, c_k$ being rational, then (\ref{eq:iso2}) shows that 
$$
 c_t (x_1^2+\ldots + x_d^2)^t = \sum_{k=1}^N \frac{\l_k}{a_k^{t}} \la x, ( b_k , \ldots,  b_k ,  c_k , \ldots,  c_k ) \ra^{2 t},
$$
which gives a representation of $\la x, x \ra^t$ as a sum of the $2t$ powers of linear forms with only 
rational coefficients. Such representations are of interest in number theory; see~\cite{Dickson, Reznick} for details.

In Section 5 we will construct some examples of index eight cubature rules on the sphere whose
nodes and weights involve only rational numbers.

\subsection{Block designs and orthogonal arrays}

Victoir~\cite{Victoir} proposed a method that can be used to reduce the size of invariant cubature rules 
on the simplex and on the sphere. His approach utilizes two combinatorial objects, namely $t$-designs and orthogonal arrays. 
We first explain these two concepts.

Let $t, k, v, \lambda$ be integers such that $0 \le t \le k \le v$.
A {\it $t$-design} is a system of a set $V$ of $v$ elements, called {\it points},
and a collection $\CB$ of $k$-element subsets of $V$, called {\it blocks},
such that every $t$-element subset of $V$ appears in exactly $\lambda$ blocks of $\CB$.
This is denoted by $t$-$(v, k, \lambda)$. It is well known (cf.~\cite{GKL07}) that
if a $t$-$(v, k, \lambda)$ with $b$ blocks exists, then
\begin{equation}
\label{eq:Tdes}
b = \lambda \frac{\binom{v}{t}}{\binom{k}{t}}
= \l \frac{v(v-1) \cdots (v-t+1)}{k(k-1) \cdots (k-t+1)}.
\end{equation}
An incidence matrix of a $t$-$(v, k, \lambda)$ is
a $v \times b$ zero-one matrix with rows and columns being indexed by points and blocks, respectively,
such that for $x \in V, B \in \CB$, its $(x, B)$th entry is $1$ iff $x \in B$.

Let $s, l, L, \mu$ be nonnegative integers. An {\it orthogonal array with strength $s$, constraints $l$ 
and index $\mu$}, is an $L \times l$ matrix, such that in every $s$ columns, each of the $2^s$ ordered 
$s$-tuples of elements $\pm1$ appears exactly $\mu$ times among $L$ rows. This is often denoted by
$OA(L, l, 2, s)$; we do not put $\mu$ in the notation, because $\mu = L/2^s$, by the definition of OA.

We need a bit more notations. Let $e_1 = (1,0,\ldots,0), \ldots, e_{d+1} = (0,\ldots,0,1)$
be the standard basis vectors of $\RR^{d+1}$. Define 
$$
v_k^{(\alpha,\beta)}
= \alpha \sum_{i=1}^k e_i + \beta \sum_{i=k+1}^{d+1} e_i. 
$$
We recall the notation $\tau(\cdot)$ as in Subsection 2.1.
Victoir proved the following.

\begin{prop}
\label{prop:cuba-Victoir}
\begin{enumerate}[i]
\item[(i)] 
Assume there exists a symmetric cubature rule of index $t$
on $\CT^{d+1}$, such that
\begin{align*}
 w_\gamma \int_{\CT^{d+1}} f(\xi) \xi^\gamma d\xi    =  \sum_{i=1}^\ell \frac{\lambda_i}{\binom{d+1}{k_i}}
     \sum_{x \in
v_{k_i}^{(\alpha_i, \beta_i)}
\CA_d} f(x)  
  + \sum_{i=\ell+1}^M \frac{\lambda_i}{| x_i \CA_d|} \sum_{x \in x_i \CA_d} f(x)
\end{align*}
Moreover assume that for $i = 1, \ldots, \ell$,
there exists a $t$-design with $d+1$ points and $b_i$ blocks of size $k_i$, with
an incidence matrix $I_i$.
Let $X_i$ be the columns of
$\alpha_i I_i + (\beta_i - \alpha_i) J_{d+1,b_i}$, where
$J_{d+1,b_i}$ denotes the all-one matrix of size $(d+1) \times b_i$.
Then,
\begin{align*}
  w_\gamma \int_{\CT^{d+1}} f(\xi) \xi^\gamma d\xi  
 =  \sum_{i=1}^\ell \frac{\lambda_i}{b_i} \sum_{x \in X_i} f(x) +
\sum_{i=\ell+1}^M \frac{\lambda_i}{|x_i \CA_d |} \sum_{x \in x_i \CA_d} f(x)
\end{align*}
defines a cubature rule of index $t$ on $\CT^{d+1}$.
\item[(ii)] 
Assume there exists a $\ZZ_2^{d+1}$-invariant
cubature rule of degree $2t+1$ on the sphere $\SS^d$:
\begin{equation*}
 \sigma_d \int_{\SS^d} f(x) d\s(x) =
\sum_{i=1}^M \frac{\lambda_i}{2^{\tau(x_i)} }
   \sum_{x \in  x_i\ZZ_2^{d+1} } f(x).
\end{equation*}
Moreover assume there exist $OA(|X_i|, \tau(x_i), 2, 2t+1)$ with rows $X_i$, $i=1, \ldots, M$.
Then,
\begin{equation*}
 \sigma_d  \int_{\SS^d} f(x) d\s(x) =
\sum_{i=1}^M \frac{\lambda_i}{|X_i|} \sum_{x \in X_i} f(x)
\end{equation*}
defines a cubature rule of degree $2t+1$ on $\SS^d$.
\end{enumerate}
\end{prop}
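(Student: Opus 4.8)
The plan is to prove both parts by the same mechanism. In each case every orbit is left untouched except the ones attached to the two-valued points $v_{k_i}^{(\a_i,\b_i)}$ (resp.\ the sign orbits $x_i\ZZ_2^{d+1}$), and I would show that replacing such a full orbit by the smaller node set $X_i$ leaves the averaged contribution of \emph{every} monomial of the relevant degree unchanged. Since a cubature functional is linear in its integrand, and since by Sobolev's reduction the rule need only be tested on the relevant invariant polynomials, it suffices to verify, for each $i$ and each monomial $x^\g$ with $|\g|\le t$,
$$
\frac{1}{b_i}\sum_{x\in X_i}x^\g=\frac{1}{\binom{d+1}{k_i}}\sum_{x\in v_{k_i}^{(\a_i,\b_i)}\CA_d}x^\g ,
$$
and its $\SS^d$-analogue for $|\g|\le 2t+1$ in part (ii). Granting this, summing over $i$ and re-adding the unchanged orbits turns the hypothesized rule into the asserted one.

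For part (i) I would first note that each column of $\a_i I_i+(\b_i-\a_i)J_{d+1,b_i}$ takes only the two values $\a_i,\b_i$, with exactly $k_i$ entries of one value, so $X_i\subset v_{k_i}^{(\a_i,\b_i)}\CA_d$; moreover the full orbit is precisely the column set of the \emph{complete} design consisting of all $\binom{d+1}{k_i}$ blocks of size $k_i$ (here one uses $\a_i\ne\b_i$). Fix $x^\g$ and let $T=\{j:\g_j>0\}$ be its support, so $|T|\le|\g|\le t$. The value of $x^\g$ on the point attached to a block $B$ depends only on $B\cap T$, so grouping the blocks by $U=B\cap T$ reduces both averages to a weighted sum, over subsets $U\subseteq T$, of powers of $\a_i$ and $\b_i$, the weight being the fraction of blocks with $B\cap T=U$. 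The crux is the balance property of $t$-designs: for any $U$ with $|U|=s\le t$ the fraction of blocks containing $U$ equals $\binom{k_i}{s}/\binom{d+1}{s}$, exactly the value for the complete design; by inclusion--exclusion (Möbius inversion over the subsets of $T$) this forces the exact-intersection fractions to agree with those of the complete design for every $U\subseteq T$. Hence the two averages coincide and the reduced rule is exact on $\CP_t^{d+1}$.

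For part (ii) the argument is the sign-pattern analogue. Writing a point of $x_i\ZZ_2^{d+1}$ as $\ve\cdot x_i$, with $\ve\in\{\pm1\}^{\tau(x_i)}$ acting on the nonzero coordinates, a monomial $x^\g$ evaluates to $\bigl(\prod_j\ve_j^{\g_j}\bigr)x_i^\g$; thus the full-orbit average equals $x_i^\g$ when every exponent on a nonzero coordinate is even and $0$ otherwise (if $\g$ meets a zero coordinate both averages vanish). Since $x^\g$ has at most $2t+1$ nonzero exponents and the array has strength $2t+1$, projecting onto those $\le 2t+1$ columns shows that each sign pattern there occurs equally often among the rows $X_i$. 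Therefore the row-average of $\prod_j\ve_j^{\g_j}$ reproduces the same $\{0,1\}$ value as the full average, and each sign orbit may be replaced by its $OA$ rows without changing the integral of any $f\in\Pi_{2t+1}(\SS^d)$.

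The main obstacle in both parts is the balance identity, namely verifying that the given combinatorial object reproduces all joint moments of order at most its strength, so that the support-$T$ distribution of a uniformly random block (resp.\ row) matches that of the complete configuration. Once that binomial/Möbius computation is in place the remainder is bookkeeping together with the reduction to symmetric (resp.\ $\ZZ_2^{d+1}$-invariant) test polynomials. I would also check the degenerate cases $\a_i=\b_i$ or $\tau(x_i)$ below the strength, where the orbit collapses and the associated design/array is trivial.
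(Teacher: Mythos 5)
The paper does not prove this proposition at all: it is quoted from Victoir's paper \cite{Victoir} (with a slight reformulation for index-type rules), so there is no in-paper argument to compare against. Your proof is correct and is essentially the standard argument one would expect Victoir's to be: reduce to monomials by linearity, observe that a monomial of degree at most the strength has support of size at most the strength, and then use the balance property of the $t$-design (via M\"obius inversion, the exact-intersection distribution of a random block restricted to a set of at most $t$ points matches that of the complete design) and the strength property of the orthogonal array (projections onto at most $2t+1$ columns are balanced, so the character sums $\prod_{j\in S}\ve_j$ average to $0$ for $\emptyset\ne S$ and to $1$ for $S=\emptyset$, matching the full $\ZZ_2$-orbit average). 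One small caveat: your appeal to Sobolev's theorem is out of place here, since the reduced rule is no longer group-invariant and Sobolev's reduction does not apply to it; but this does no harm, because your actual verification proceeds monomial by monomial, which already suffices by linearity. You are also right to flag the sign conventions in the matrix $\alpha_i I_i + (\beta_i-\alpha_i)J_{d+1,b_i}$ (as printed, its columns carry the values $\beta_i$ on the block and $\beta_i-\alpha_i$ off it, which looks like a typo for $(\alpha_i-\beta_i)I_i+\beta_i J$); your argument only uses that each column is a two-valued vector constant on a design block and on its complement, so it is insensitive to this.
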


Let us briefly explain on how Proposition~\ref{prop:cuba-Victoir} is used; see~\cite[Subsection 4.4]{Victoir}.
We start with a symmetric cubature rule of index $t$ 
on $\CT^{d+1}$. By Proposition~\ref{prop:cuba-Victoir}~(i), if there is a $t$-design for 
the set $\{v_{k_i}^{(\a_i,\b_i)}: i=1,\ldots, \ell\}$,
we can reduce the size of this cubature rule,
where (\ref{eq:Tdes}) is used to calculate the values of $b_i$ with $v = d+1$. 
By Propositions~\ref{prop:cuba-SSd0} and~\ref{prop:cuba-CTd}, the resulted cubature rule 
can be transformed to a $\ZZ_2^{d+1}$-invariant cubature rule of degree $2t+1$ on $\SS^d$.
Finally, by Proposition~\ref{prop:cuba-Victoir}~(ii), we reduce the size of the last cubature rule 
without reducing its degree of exactness. 

We emphasize that the process mentioned above is not exactly the same as that of Victoir,
who wrote his method in terms of cubature rules of degree-type. We slightly modify his approach 
to fit the cubature rules of index type, where Proposition~\ref{prop:cuba-CTd} serves as a bridge between
cubature of degree-type and cubature of index-type. In Sections 4 and 5, we will apply the method
to derive several new cubature rules on the simplex and on the sphere. 

\section{Symmetric cubature rules for the simplex}
\setcounter{equation}{0}

In the first subsection, we prove a result that explains why many of the cubature rules in \cite{Stroud} are
not positive. We then present our method of constructing cubature rules in the following three subsections.

\subsection{Positive cubature rules}
As mentioned in the introduction, many known cubature rules on $T^d$ are invariant and constructed 
by choosing their nodes among the centroid 
$  
    ( \tfrac{1}{\sqrt{d+1}}, \ldots, \tfrac{1}{\sqrt{d+1}})
$ 
and orbits of the points of the form \allowbreak $(r,\ldots, r, 0, \ldots, 0) \in T^d$.  Let us define these points
more precisely.  For $1 \le i \le d$, define $v_k = e_1+\ldots + e_k = (1, \ldots, 1, 0, \ldots, 0)$. Then the 
points are 
\begin{equation} \label{symmPoints}
        ( \tfrac{1}{\sqrt{d+1}}, \ldots, \tfrac{1}{\sqrt{d+1}}) \quad \hbox{and} \quad
        (r_k v_k )\CA_d  , \quad  1 \le k \le d +1,
\end{equation}
where $r_k$ are real numbers such that $r_k v_k = (r_k,\ldots, r_k, 0,\ldots, 0)\in T^d$, and
$(r_k v_k){\CA_d}$ means the orbit of $(r_k v_k, 1-|r_k v_k|)$ under the symmetric group of $d+1$ elements. 
 
\begin{thm} \label{thm:3.1}
\label{thm:cuba-Main1}
If a symmetric cubature rule of degree $n$ with $n \ge 4$ for $W_\g$ on $T^d$ uses only points form those in 
\eqref{symmPoints}, then it cannot be positive. 
\end{thm}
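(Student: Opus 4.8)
The plan is to pass to homogeneous coordinates on $\CT^{d+1}$, invoke Sobolev's theorem to reduce exactness to symmetric polynomials, and then exploit that through degree $4$ there are only finitely many — in fact four — nontrivial conditions. Writing $p_j(\xi)=\sum_{i=1}^{d+1}\xi_i^{j}$ for the power sums, the algebra of symmetric polynomials of degree $\le 4$, reduced modulo the relation $p_1\equiv 1$ on $\CT^{d+1}$, is spanned by $1,p_2,p_3,p_4,p_2^2$ (for $d\ge 3$; the low-dimensional cases are checked directly). Hence a degree-$n$ symmetric rule with $n\ge 4$ must in particular reproduce the four moments $\int p_2$, $\int p_3$, $\int p_4$, $\int p_2^2$ against $\xi^\gamma$, all computable from \eqref{moment-g}.

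The decisive structural fact I would use is the rigidity of the coordinate profile: a point in the orbit $(r_k v_k)\CA_d$ has coordinates lying in $\{0,\,r_k,\,s_k\}$ with $s_k:=1-kr_k$, the value $r_k$ occurring $k$ times and $s_k$ exactly once, while the centroid carries the single value $\tfrac1{d+1}$. Thus each $p_j$ is constant on an orbit, equal there to $k r_k^{\,j}+s_k^{\,j}$, and the four quantities $(p_2,p_3,p_4,p_2^2)$ trace an explicit curve $C_k$ parametrised by $r_k\in[0,1/k]$. Positivity of the rule says precisely that the target vector $(\int p_2,\int p_3,\int p_4,\int p_2^2)$ is a convex combination of the vectors lying on $\bigcup_k C_k$ together with the centroid vector; the theorem asserts that this membership fails for every choice of orbits and radii.

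To certify non-membership I would exhibit a separating functional: a symmetric polynomial $\Phi=f+ap_2+bp_3+cp_4+ep_2^2$ of degree $\le 4$ that is $\le 0$ at every admissible node and at the centroid, yet has $\int \Phi\,\xi^\gamma>0$. Since the weights are positive, this forces $0<\int\Phi=\sum_k\lambda_k\Phi(\xi_k)\le 0$, a contradiction. A natural starting point is the Cauchy--Schwarz form $p_2^2-p_3\le 0$ on $\CT^{d+1}$ (from $(\sum\xi_i^2)^2\le(\sum\xi_i)(\sum\xi_i^3)$ and $p_1=1$). Restricted to $C_k$ it factors completely,
$$
(p_2^2-p_3)\big|_{C_k}=k^2(k+1)^2\,r\Big(r-\tfrac1k\Big)\Big(r-\tfrac1{k+1}\Big)^2,
$$
which is $\le 0$ on $[0,1/k]$, vanishing exactly at the vertex ($r=0$), the face barycentre ($r=1/k$), and the equal-coordinate point $r=1/(k+1)$. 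This already has the correct sign at every node; the only defect is that a direct evaluation of \eqref{moment-g} gives $\int(p_2^2-p_3)\,\xi^\gamma<0$, the wrong sign for the integral. The plan is therefore to correct it by adding terms in $p_4$ and the lower-order invariants, with coefficients tuned to $d$ and $\gamma$, so as to reverse the sign of the integral while preserving nonpositivity along every $C_k$.

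The principal obstacle is that nonpositivity of $\Phi$ must hold uniformly over two independent degrees of freedom — the continuous radius $r\in[0,1/k]$ and the discrete orbit type $k\in\{1,\dots,d\}$ — and for all $d$ and all $\gamma>-1$, while only the single scalar inequality $\int\Phi>0$ is gained. By linear-programming duality the existence of such a $\Phi$ is \emph{equivalent} to the theorem, so pinning down its coefficients is the entire technical heart; the leverage should come from the linkage $s_k=1-kr_k$ and the unit multiplicity of $s_k$. An equivalent, more hands-on route is to argue on the dual side: treat the four moment identities as linear constraints on the weights and show, by elimination using the factorization above, that no nonnegative assignment of the $\lambda_k$ satisfies all four at once. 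I expect the uniform control in $k$ and $r$ to be the crux either way, whereas the Sobolev reduction and the moment bookkeeping are routine.
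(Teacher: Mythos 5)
You have correctly identified the strategy the paper actually uses --- reduce via Sobolev's theorem to symmetric polynomials of degree $4$ and exhibit a degree-$4$ symmetric separating polynomial that has a fixed sign at every admissible node while its integral against $\xi^\g$ makes reproduction by a positive rule impossible --- but your proposal stops exactly where the proof has to begin: you never produce the separating polynomial, and you concede that ``pinning down its coefficients is the entire technical heart.'' Observing that the existence of such a $\Phi$ is \emph{equivalent} to the theorem by LP duality establishes nothing until the certificate is exhibited. Moreover, the plan as formulated is structurally blocked: your candidate $p_2^2-p_3$ is nonpositive on \emph{all} of $\CT^{d+1}$ by Cauchy--Schwarz, and any $\Phi\le 0$ on the whole simplex automatically has $\int\Phi\,\xi^\g\,d\xi\le 0$; so any correction that reverses the sign of the integral must make $\Phi$ positive somewhere off the node curves while keeping it $\le 0$ on them, uniformly in $k$, $r$, $d$ and $\g$. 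That delicate two-sided control is precisely the part you leave open.

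The paper removes the tension with one clean choice: take $P_4=S_{\{4\}}+aS_{\{3,1\}}+bS_{\{2,2\}}$ to be the degree-$4$ \emph{orthogonal} polynomial for $\xi^\g$ on $\CT^{d+1}$. It is found explicitly by lifting to the sphere and solving $\Delta_h\bigl[S_{\{8\}}+aS_{\{6,2\}}+bS_{\{4,4\}}\bigr]=0$ for the Dunkl Laplacian of $\ZZ_2^{d+1}$ --- a two-parameter linear solve giving $a=-4(\g+4)/(d(\g+1))$ and $b=6(\g+3)(\g+4)/(d(\g+1)(\g+2))$. Orthogonality gives $\int_{\CT^{d+1}}P_4(\xi)\,\xi^\g\,d\xi=0$ for free, so one no longer needs a positive integral at all: it suffices that $P_4$ be \emph{strictly} positive at every candidate node, which is a finite check, $P_4(v_k)=k+2\binom{k}{2}a+\binom{k}{2}b>0$ for $1\le k\le d+1$. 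Homogeneity of $P_4$ transfers the sign to any rescaling $rv_k$, so the continuous radius you worry about never enters; a positive rule supported on these orbits would then give $0=\int P_4\,\xi^\g\,d\xi=\sum_k\lambda_kP_4(\xi_k)>0$, a contradiction. To complete your argument you would have to supply exactly such a certificate, and the orthogonal polynomial is the canonical one.
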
 

\begin{proof}
The key ingredient of the proof is an orthogonal polynomial of degree $4$.
By Proposition \ref{prop:cuba-CTd},
we can consider a homogeneous orthogonal polynomial, call it $P_4$,
of degree $4$ on $\CT^{d+1}$. To derive this polynomial,
we use a relation between orthogonal polynomials on the simplex and
those on the sphere \cite{X98}, which shows that
$Y(x) : = P_4(x_1^2,\ldots,x_{d+1}^2)$ is an orthogonal polynomial on $\SS^d$
with respect to the measure $\prod_{i=1}^{d+1}|x_i|^{2 \g+1} d\sigma$.
The polynomial $Y$ then satisfies the equation $\Delta_h Y =0$, where 
$\Delta_h $ is the Dunkl Laplacian associated with the group
$\ZZ_2^{d+1}$.
The operator $\Delta_h$ is usually a 
differential-difference operator, but it becomes a differential operator when
it is applied to polynomials that are invariant
under $\ZZ_2^{d+1}$ (see, \cite[p. 156]{DX}). Consequently, the polynomial $Y$ is a solution of 
$$
\Delta_h Y =  \sum_{i=1}^{d+1} \frac{\partial^2 Y }{\partial x_i^2}
+ (2 \g+1) \sum_{k=1}^{d+1} \frac{1}{x_i}\frac{\partial Y}{\partial x_i} =0.
$$

With this explicit expression of $\Delta_h$, a straightforward computation shows that 
\begin{align*}
  \Delta_h S_{ \{8\}} (x) & = 16 (\g+4) S_{\{6\}}(x), \\
  \Delta_h S_{ \{6,2\} } (x) & = 12 (\g+3) S_{\{4,2\}}(x) + 4 d (\g+1) S_{\{6\}}(x),\\
  \Delta_h S_{ \{4,4\}} (x) & = 8 (\g+2) S_{\{4,2\}}(x),
\end{align*}
from which it is easy to verify that 
$$
   \Delta_h \left[ S_{ \{8\}} + a S_{ \{6,2\} } + bS_{ \{4,4\}} \right] =0, \quad
       a =  - \frac{4 (\g+4)}{d(\g+1)}, \quad 
       b =   \frac{6 (\g+3) (\g+4)}{ d (\g+1) (\g+2)}.
$$
Consequently, as discussed above, the polynomial 
$$
    P_4   =  S_{ \{4\}} + a S_{ \{3,1\} } +  b S_{ \{2,2\}}
$$
is an orthogonal polynomial of degree $4$ with respect to $\xi^\g $ on $\CT^d$.  

A quick computation using Lemma \ref{S-values} below shows that, for $1 \le k \le d$,  
\begin{align*}
 \frac{1}{| v_k \CA_d|} \sum_{x \in v_k \CA_d} P_4(x)
&
= P_4(v_k) = k + 2 \binom{k}{2} a +\binom{k}{2}b \\
  &  = \frac{  k ( d(\g+1) (\g+2) +  (\g+4)(\g +17) (k-1) )}{ d (\g+1) (\g+2)}   > 0.    
\end{align*}
Consequently, should an invariant cubature rule of index $4$ that uses only the orbits of $v_k$ be positive, 
we would have
$$
  0  = w_\g \int_{\CT^{d+1}} P_4(\xi) \xi^\g d\xi
= \sum_{k=1}^M
\frac{\l_k}{|v_k \CA_d|} \sum_{x \in v_k \CA_d } P_4(x) > 0,
$$
which is an obvious contradiction. Since a cubature rule of index $n$, $n > 4$, is automatically a cubature
rule of degree 4 as seen by Proposition \ref{prop:cuba-CTd}, this completes the proof. 
\end{proof}

Theorem~\ref{thm:cuba-Main1} explains why there are no positive cubature rules of degree $4$ or
higher in higher dimensions among those given in \cite{Stroud}. 
When $\gamma = -1/2$, this  result was proved by Bajnok~\cite{Bajnok}, where he worked with 
the cubature rules on the sphere. 


Accordingly, in order to construct positive cubature rules,
we need to include at least one point not among those 
in \eqref{symmPoints}. We shall add one point in the form of 
$$
      a e_1 + b e_2 + \ldots + b e_{p+1} = (a, b,\ldots, b, 0,\ldots, 0), \quad a, b > 0,
$$
which has $p$ elements of $b$ and one $a$. By Proposition \ref{prop:cuba-CTd},  we work with homogeneous 
polynomials for the integral over $\CT^{d+1}$, for which we can rescale of the point and consider 
$(a, 1,\ldots,1, 0,\ldots, 0)$ with $a > 0$ and $a \ne 1$. 

Let $k_1, \ldots, k_q$ be four distinct integers,
$1 \le k_1, \ldots, k_q \le d+1$.
We consider cubature rules of index $t$ 
in the form
\begin{equation} \label{CFdegreet}
   w_\g \int_{\CT^{d+1}} f(\xi) \xi^\g d\xi
      = \sum_{j=1}^q
\frac{A_j}{\binom{d+1}{k_j}} \sum_{x \in v_{k_j}\CA_d} f(x)
+
\frac{A_{q+1}}{d+1} \sum_{x \in u_{a,p}  \CA_d}  f(x),  
\end{equation}
where $u_{a,p}: = a e_1+ e_2 + \ldots +e_{p+1} = (a, 1,\ldots, 1, 0,\ldots, 0)$ with $a > 0$ and $a \ne 1$. 
Since the cubature rule is of index type, it can be rewritten, upon rescale, as
\begin{align} \label{CF-Inside}
   w_\g \int_{\CT^{d+1}} f(\xi) \xi^\g d\xi  =
& \sum_{j=1}^q
\frac{\l_j}{\binom{d+1}{k_j}}
      \sum_{x \in v_{k_j} \CA_d } f \left( \frac{x}{k_j} \right)  \\
& +
\frac{\lambda_{q+1}}{d+1}
\sum_{x \in  u_{a,p}\CA_d }   f \left(  \frac{x}{a+p }\right),  \notag
\end{align}
which has all nodes on $\CT^{d+1}$ and has  the weights given by 
\begin{equation} \label{lambda-A}
    \lambda_j = A_j k_j^t, \quad j =1, \ldots, q, \quad \hbox{and} \quad \lambda_{q+1} = A_{q+1} (a+p)^t. 
\end{equation}
By Proposition \ref{prop:cuba-CTd}, the cubature rule \eqref{CF-Inside} is equivalent to a cubature 
rule of degree $t$ on $T^d$. 

\subsection{Cubature rule of degree 4}  
We first describe our method of constructing positive cubature rules for rules of degree $4$ on $T^d$. 
In this case we choose $q =4$, the reason will be clear by the end of this subsection. 

Since the cubature rule \eqref{CFdegreet} is invariant under the group $\CA_d$, we only need to 
work with symmetric polynomials. There are five symmetric homogeneous polynomials of degree $4$
in $\RR^{d+1}$ for $d \ge 3$, which are 
$$
    S_{\{4\}}, \quad S_{\{3,1\}}, \quad S_{\{2,2\}}, \quad  S_{\{2,1,1\}}, \quad S_{\{1,1,1,1\}}. 
$$
We will need the values of these polynomials on the nodes of the cubature rule \eqref{CFdegreet}
which come down to a straightforward counting and the results are collected in the lemma below. 

\begin{lem} \label{S-values}
Let $\ell = (\ell_1, \ldots, \ell_r)$ be a partition of $t$ such that
$\ell_{i_j + 1} = \ldots = \ell_{i_{j+1}}, \ell_{i_j} \ne \ell_{i_{j+1}}$, where $j= 0, ..., s-1$.
For $1 \le k \le d+1$ and $u_{a,p} = (a,1,\ldots, 1,0,\ldots, 0)$, we have 
\begin{align*}
   S_{\{\ell\}}(v_k) & = \binom{k}{r} \binom{r}{i_1-i_0, i_2-i_1, \cdots, i_s-i_{s-1}}, \\
   S_{\{\ell\}}(u_{a,p}) & =
\sum_{j=0}^{s-1} a^{\ell_{i_{j+1}}} \binom{r-1}{i_1-i_0, \cdots, i_{j+1}-i_j-1, \cdots, i_s-i_{s-1}} \binom{p}{r-1} \\
& \qquad + \binom{r}{i_1-i_0, i_2-i_1, \cdots, i_s-i_{s-1}} \binom{p}{r}
\end{align*}
where $\binom{r}{i_1-i_0, i_2-i_1, \ldots, i_s-i_{s-1}} = \frac{r!}{(i_1-i_0)! (i_2-i_1)! \cdots (i_s-i_{s-1})!}$.
\end{lem}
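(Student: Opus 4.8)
The plan is to read $S_{\{\ell\}} = \mathrm{sym}(x_1^{\ell_1}\cdots x_r^{\ell_r})$ as the monomial symmetric polynomial, i.e. the sum of all \emph{distinct} monomials $\xi^\alpha$ obtained by permuting the exponent vector $(\ell_1,\ldots,\ell_r,0,\ldots,0)\in\NN_0^{d+1}$. Each such $\alpha$ encodes a placement of the multiset of parts $\{\ell_1,\ldots,\ell_r\}$ — whose distinct values occur with multiplicities $m_j := i_j - i_{j-1}$ for $j=1,\ldots,s$ (with $i_0=0$ and $i_s=r$) — into the $d+1$ coordinate slots, the remaining $d+1-r$ slots carrying exponent $0$. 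The single observation driving everything is that $\xi^\alpha$ vanishes unless every nonzero exponent of $\alpha$ lands on a nonzero coordinate of $\xi$ (using the convention $0^0=1$), in which case $\xi^\alpha$ is the product of those nonzero coordinates raised to the corresponding exponents. Thus evaluating $S_{\{\ell\}}$ reduces to a weighted count of placements supported on the nonzero coordinates of the evaluation point.

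First I would treat $v_k$, whose nonzero coordinates are the $k$ entries equal to $1$. Every surviving placement then contributes exactly $1$, so $S_{\{\ell\}}(v_k)$ is simply the number of placements of the $r$ parts into $k$ slots. I would count this by first choosing the $r$ occupied slots ($\binom{k}{r}$ ways) and then distributing the parts among them ($\binom{r}{m_1,\ldots,m_s}$ ways, the multinomial count for a multiset with multiplicities $m_j$), which is exactly the first displayed formula.

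Next I would treat $u_{a,p}$, whose nonzero coordinates are coordinate $1$ (value $a$) together with the $p$ coordinates equal to $1$, splitting the surviving placements according to whether coordinate $1$ receives a nonzero exponent. If it does not, all $r$ parts lie among the $p$ ones-slots, each contributing $1$, and the count is $\binom{p}{r}\binom{r}{m_1,\ldots,m_s}$, the second term of the claim. If it does, the exponent on coordinate $1$ must equal one of the distinct part-values $\ell_{i_{j+1}}$, producing the factor $a^{\ell_{i_{j+1}}}$, while the remaining $r-1$ parts — the multiset with one copy of $\ell_{i_{j+1}}$ deleted, so that block $j+1$ now has multiplicity $m_{j+1}-1$ — occupy $r-1$ of the $p$ ones-slots in $\binom{p}{r-1}\binom{r-1}{m_1,\ldots,m_{j+1}-1,\ldots,m_s}$ ways. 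Summing over the $s$ distinct values, indexed by $j=0,\ldots,s-1$, reproduces the first sum, and adding the two cases yields the second formula.

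The only genuine care needed lies in the bookkeeping of the second case: the sum must range over distinct part-values, one term per block, rather than over the individual parts, since otherwise a value occurring with multiplicity $m_{j+1}$ would be counted $m_{j+1}$ times. This is precisely why the reduced-multiplicity multinomial $\binom{r-1}{m_1,\ldots,m_{j+1}-1,\ldots,m_s}$ appears. I expect matching these reduced multinomials against the multinomial-coefficient notation used in the statement to be the main — and purely combinatorial — point, after which the identity is immediate.
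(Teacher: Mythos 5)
Your proposal is correct and is precisely the ``straightforward counting'' argument the paper alludes to (the paper states the lemma without writing out a proof): interpreting $S_{\{\ell\}}$ as the monomial symmetric polynomial and counting placements of the multiset of parts onto the nonzero coordinates, with the case split at $u_{a,p}$ according to whether the $a$-coordinate carries a nonzero exponent. The formulas you obtain match the lemma, and your check against the multiplicity bookkeeping (one term per distinct part-value, with the reduced multinomial) is exactly the point that makes the second formula come out right.
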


We now apply the cubature rule \eqref{CFdegreet} to the symmetric polynomials. Let 
$m_{\{\ell\}}$ denote the integral of $ S_{\{\ell\}}$ on $\CT^{d+1}$, 
$$
     m_{\{\ell\}} := w_\g \int_{\CT^{d+1}} S_{\{\ell \}}(\xi) \xi^\g d\xi. 
$$ 
Their values can be easily evaluated using the explicit formula of
the moments of $\xi^\g d \xi$;
see the following section.

\begin{prop}
A positive cubature rule \eqref{CFdegreet} of index $4$ exists if 
\begin{equation} \label{A5}
            A_5 = \left( m_{\{3,1\} } -  2 m_{\{2,2\}} \right) / (a (a-1)^2 p ),
\end{equation}
and, for distinct positive integers $k_1,k_2,k_3,k_4$, the following system of equations 
\begin{align} \label{system}
\begin{split}
     \sum_{j=1}^4  A_j  k_j  + A_5 (a^4+p) & = m_{\{4\}} \\
     \sum_{j=1}^4  A_j  \binom{k_j}{2}  + A_5 \left(a^2 p + \binom{p}{2}\right) & = m_{\{2,2\}} \\
  3 \sum_{j=1}^4  A_j  \binom{k_j}{3}  + A_5 \left( (a^2   +2a) \binom{p}{2} + 3 \binom{p}{3}\right) & = m_{\{2,1,1\}} \\
     \sum_{j=1}^4  A_j  \binom{k_j}{4}  + A_5 \left( a  \binom{p}{3}+   \binom{p}{4} \right) & = m_{\{1,1,1,1\}}
\end{split} 
\end{align}
can be solved with positive $A_j$ and $a$. Furthermore, if $\{A_1,A_2, A_3, A_4, a\}$ is such a solution, 
then the positive cubature rule of index $4$ with all nodes in $\CT^{d+1}$ takes the form \eqref{CF-Inside} with
weights given by \eqref{lambda-A}. 
\end{prop}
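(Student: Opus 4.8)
The plan is to invoke Sobolev's theorem to reduce exactness of \eqref{CFdegreet} to a handful of scalar identities, evaluate these with \lemref{S-values}, and then check that \eqref{A5} and \eqref{system} are precisely what make them hold. Since the rule \eqref{CFdegreet} (here with $q=4$) is $\CA_d$-invariant, Sobolev's theorem says it is of index $4$ if and only if it is exact on a basis of $(\CP_4^{d+1})^{\CA_d}$. For $d\ge 3$ this space is five-dimensional, spanned by $S_{\{4\}},S_{\{3,1\}},S_{\{2,2\}},S_{\{2,1,1\}},S_{\{1,1,1,1\}}$, so it suffices to verify the five identities obtained by feeding each $S_{\{\ell\}}$ into \eqref{CFdegreet}. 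Because every $S_{\{\ell\}}$ is symmetric, it is constant on each $\CA_d$-orbit, so the orbit sums collapse, the normalizations being arranged exactly so that the $v_{k_j}$-orbit contributes $A_j S_{\{\ell\}}(v_{k_j})$ and the $u_{a,p}$-orbit contributes $A_5 S_{\{\ell\}}(u_{a,p})$. Substituting the closed forms of \lemref{S-values} then turns each identity into a linear equation in $A_1,\dots,A_5$ with moment $m_{\{\ell\}}$ on the right.

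Next I would observe that four of these equations --- those coming from $S_{\{4\}}$, $S_{\{2,2\}}$, $S_{\{2,1,1\}}$ and $S_{\{1,1,1,1\}}$ --- are, after inserting the values of \lemref{S-values}, literally the four lines of \eqref{system}, and so hold by hypothesis. This is the reason for the choice $q=4$: it makes \eqref{system} a square $4\times4$ system for $A_1,\dots,A_4$, with coefficient matrix $\bigl(\binom{k_j}{m}\bigr)_{m=1,\dots,4}$.

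The one step with genuine content --- and the main obstacle --- is the remaining equation, the one from $S_{\{3,1\}}$. Here \lemref{S-values} gives $S_{\{3,1\}}(v_k)=2\binom{k}{2}=2S_{\{2,2\}}(v_k)$ while $S_{\{3,1\}}(u_{a,p})=a^3p+ap+2\binom{p}{2}$. The point is that this equation is \emph{not} independent: subtracting twice the $S_{\{2,2\}}$-equation from it annihilates every $A_j$ term with $j\le 4$ and leaves only $A_5\bigl(a^3p+ap-2a^2p\bigr)=A_5\,ap(a-1)^2=m_{\{3,1\}}-2m_{\{2,2\}}$, which is exactly \eqref{A5}. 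Thus, granted the $S_{\{2,2\}}$-line of \eqref{system}, the $S_{\{3,1\}}$-identity is equivalent to the defining relation \eqref{A5} for $A_5$ and so also holds. Recognizing this decoupling is what lets one fix $A_5$ in advance via \eqref{A5} and then solve \eqref{system} for the other four weights, rather than facing an overdetermined $5\times5$ system. With all five identities verified, Sobolev's theorem gives index $4$.

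Finally, to pass to a positive rule with nodes inside $\CT^{d+1}$, I would use the rescaling in Proposition~\ref{prop:cuba-CTd}. Since $a>0$ (so $a+p>0$) and each $k_j>0$, dividing the orbit of $v_{k_j}$ by $k_j$ and that of $u_{a,p}$ by $a+p$ puts every node in $\CT^{d+1}$, and the index-type weight transformation produces exactly \eqref{CF-Inside} with weights \eqref{lambda-A}, namely $\lambda_j=A_j k_j^{4}$ and $\lambda_5=A_5(a+p)^{4}$. Positivity of the $A_j$, assumed in the hypothesis, makes every $\lambda_j>0$, so the resulting rule is positive, completing the argument.
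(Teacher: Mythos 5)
Your proposal is correct and follows essentially the same route as the paper: invoke Sobolev's theorem to reduce to the five symmetric polynomials of degree $4$, identify four of the resulting moment equations with \eqref{system}, and observe that the $S_{\{3,1\}}$-equation minus twice the $S_{\{2,2\}}$-equation isolates $A_5\,a(a-1)^2p = m_{\{3,1\}}-2m_{\{2,2\}}$, i.e.\ \eqref{A5}. The only addition is your explicit final paragraph on rescaling into $\CT^{d+1}$, which the paper leaves to the earlier discussion of \eqref{CF-Inside} and \eqref{lambda-A}.
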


\begin{proof}
By Sobolev' theorem, the existence of the cubature rule \eqref{CFdegreet}
is equivalent to the solvability of the system of equations
$$
       \sum_{j=1}^4 A_jS_{\{\ell \} } (v_{k_j}) + A_5 S_{\{\ell \}}(u_{a,p})  = m_{\{\ell\}}
$$
for all partitions of $4$. The equations in \eqref{system} correspond to
$\ell = \{4\}$, $\ell = \{2,2\}$, $\ell = \{2,1,1\}$ and $\ell = \{1,1,1,1\}$ by 
Lemma \ref{S-values}. The equation for
$\ell = \{3,1\}$
takes the form
$$
      2 \sum_{j=1}^4 A_j \binom{k_j}{2} + A_5 \left( (a^3+a) p + 2 \binom{p}{2}\right)   = m_{\{3,1\}}. 
$$
Taking the difference of this equation with 2 times of the equation for $\l = (2,2)$, we obtain 
$$
          a (a-1)^2 p A_5   = m_{\{3,1\} } -  2 m_{\{2,2\}},  
$$
which gives the formula for $A_5$. 
\end{proof}

We should comment on the nature of this proposition. In order to find cubature rules that use small number of 
points, we should solve the moment equations with $a$, in $u_{a,p}$, as a variable, which means that we should 
solve, heuristically, the moment equations with five parameters, say $A_1,A_2,A_3,A_4, a$. However, the equations 
in these parameters are nonlinear and can only be solved numerically. Instead, we have added one more 
coefficient and solve the linear system of equations for $A_1,A_2,A_3,A_4,A_5$ symbolically with $a$ as a 
free parameter. We then look for positive cubature rules by choosing $a$ judiciously. Whenever possible, 
we look for solutions for which $\l_i$ are all nonnegative and with as many being zero as possible to reduce 
the number of nodes. In fact, in many of our examples in the following section,
we have at least one zero coefficient.
For those, in essence, we have solutions for the non-linear system of five equations with five parameters. 

\subsection{Cubature of degree 5}
With the same set of nodes, we now consider cubature rules of degree $5$. It turns out that they are easier to
deal with. There are seven symmetric homogeneous polynomials of degree $5$ in $\RR^{d+1}$, which are 
$$
    S_{\{5\}},  \quad S_{\{4,1\}}, \quad   S_{\{3,1,1\}}, \quad  S_{\{3.2\}},
     \quad S_{\{2,2,1\}},\quad         S_{\{2,1,1,1\}}, \quad S_{\{1,1,1,1,1\}}. 
$$
Their values on the potential nodes are summarized in Lemma 3.2. 

It turns out that we can solve $a$ explicitly and turn the nonlinear system equations of moments into 
linear system. In other words, the seven moment equations can be solved with 7 parameters, $a, A_1,\ldots, A_6$.
Hence, we choose $q = 6$ in \eqref{CFdegreet}.

\begin{prop}
A positive cubature rule \eqref{CFdegreet} of index $5$ exists if $q = 6$, 
\begin{equation} \label{aA6}
            a =   \frac{p-1}{2} \frac{ m_{\{4,1\}} - m_{\{3,2\}} }{m_{\{3,1,1\}}- m_{\{2,2,1\}}} -1, \qquad 
            A_6 = \frac{m_{\{4,1\}} - m_{\{3,2\} }} {p a (a-1)^2(a+1)},
\end{equation}
and, for distinct positive integers $k_1,k_2,k_3,k_4, k_5$, the following system of equations 
\begin{align} \label{system5}
\begin{split}
   \sum_{j=1}^5  A_j  k_j  + A_6 (a^5+p) & = m_{\{5\}} \\
  2 \sum_{j=1}^5  A_j  \binom{k_j}{2}  + A_6 \left( a^4 p +a p + 2 \binom{p}{2}\right) & = m_{\{4,1\}} \\
  3 \sum_{j=1}^5  A_j  \binom{k_j}{3}  + A_6 \left( (a^3 +2a) \binom{p}{2} + 3 \binom{p}{3}\right) & = m_{\{3,1,1\}} \\
  4  \sum_{j=1}^5  A_j  \binom{k_j}{4}  + A_6 \left( (a^2 +3a ) \binom{p}{3}+ 4 \binom{p}{4} \right) & = m_{\{2,1,1,1\}} \\
    \sum_{j=1}^5  A_j  \binom{k_j}{4}  + A_6 \left( a  \binom{p}{4}+   \binom{p}{5} \right) & = m_{\{1,1,1,1,1\}} 
\end{split} 
\end{align}
can be solved with positive $A_j$. Furthermore, if $\{A_1,A_2, A_3, A_4, A_5\}$ is such a solution, 
then the positive cubature rule of index $5$ with all nodes in $\CT^{d+1}$ takes the form \eqref{CF-Inside} with
weights given by \eqref{lambda-A}. 
\end{prop}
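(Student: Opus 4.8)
The plan is to reduce the index-$5$ condition to a finite system of moment equations via Sobolev's theorem, evaluate all the relevant symmetric polynomials with \lemref{S-values}, and then exploit a cancellation that lets $a$ be solved in closed form. First I would invoke Sobolev's theorem: since the rule \eqref{CFdegreet} is $\CA_d$-invariant, it is of index $5$ if and only if it integrates exactly the seven symmetric generators $S_{\{5\}}, S_{\{4,1\}}, S_{\{3,1,1\}}, S_{\{3,2\}}, S_{\{2,2,1\}}, S_{\{2,1,1,1\}}, S_{\{1,1,1,1,1\}}$ of $(\CP_5^{d+1})^{\CA_d}$. This yields seven equations of the shape $\sum_{j} A_j S_{\{\ell\}}(v_{k_j}) + A_6 S_{\{\ell\}}(u_{a,p}) = m_{\{\ell\}}$. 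Using \lemref{S-values} I would tabulate each $S_{\{\ell\}}(v_{k_j})$ and $S_{\{\ell\}}(u_{a,p})$; the key observation is that $S_{\{4,1\}}(v_k)=S_{\{3,2\}}(v_k)=2\binom{k}{2}$ and $S_{\{3,1,1\}}(v_k)=S_{\{2,2,1\}}(v_k)=3\binom{k}{3}$, so the two partitions in each of these pairs have identical $v_k$-contributions.

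The heart of the argument is an elimination. Subtracting the $\{3,2\}$ equation from the $\{4,1\}$ equation cancels every $A_j$-term and leaves a relation purely in $a$ and $A_6$; the $u_{a,p}$-values differ by $a^4 + a - a^3 - a^2 = a(a-1)^2(a+1)$, so this relation reads $A_6\, p\, a(a-1)^2(a+1) = m_{\{4,1\}} - m_{\{3,2\}}$. Likewise, subtracting the $\{2,2,1\}$ equation from the $\{3,1,1\}$ equation gives $A_6\binom{p}{2} a(a-1)^2 = m_{\{3,1,1\}} - m_{\{2,2,1\}}$, using $a^3 - 2a^2 + a = a(a-1)^2$. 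Dividing the first relation by the second cancels the common factor $A_6\, a(a-1)^2$ and produces $a$ as a linear (hence explicit) expression, namely the first formula in \eqref{aA6}; substituting this back into the first relation yields $A_6$, the second formula in \eqref{aA6}.

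With $a$ and $A_6$ now fixed constants, I would observe that the five equations attached to the partitions $\{5\}, \{4,1\}, \{3,1,1\}, \{2,1,1,1\}, \{1,1,1,1,1\}$ are exactly the linear system \eqref{system5} in the unknowns $A_1,\ldots,A_5$. The remaining two equations ($\{3,2\}$ and $\{2,2,1\}$) are then automatic: each is the difference between an equation already present in \eqref{system5} and one of the two relations used to pin down $a$ and $A_6$, both of which hold by construction. Thus any positive solution $(A_1,\ldots,A_5)$ of \eqref{system5}, together with $a>0$ and $A_6>0$, satisfies all seven moment equations and so defines a positive symmetric cubature rule of index $5$. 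A final rescaling of each orbit, as in Proposition~\ref{prop:cuba-CTd}, moves the nodes into $\CT^{d+1}$ and converts the weights by \eqref{lambda-A}, giving the stated form \eqref{CF-Inside}.

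I expect the main obstacle to be positivity rather than solvability. The formulas \eqref{aA6} force the values of $a$ and $A_6$, so these are positive only when the moment differences $m_{\{4,1\}}-m_{\{3,2\}}$ and $m_{\{3,1,1\}}-m_{\{2,2,1\}}$ carry compatible signs, and the square system \eqref{system5} must additionally admit a solution with all $A_j>0$; neither is guaranteed a priori. In this sense the proposition is really a reduction, replacing a nonlinear search for positive rules by the solvability of one explicit linear system, with the substantive positivity checks deferred to the concrete constructions of the later sections. The one piece of genuine algebra is the pair of factorizations $a(a-1)^2(a+1)$ and $a(a-1)^2$, which is precisely what makes $a$ solvable in closed form and collapses the nonlinear moment system to a linear one.
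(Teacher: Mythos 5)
Your proposal is correct and follows essentially the same route as the paper: Sobolev's theorem reduces the problem to the seven moment equations for partitions of $5$, five of which form \eqref{system5}, and the differences of the $\{4,1\}$/$\{3,2\}$ and $\{3,1,1\}$/$\{2,2,1\}$ pairs cancel the $A_j$-terms and yield $p\,a(a-1)^2(a+1)A_6 = m_{\{4,1\}}-m_{\{3,2\}}$ and $\binom{p}{2}a(a-1)^2A_6 = m_{\{3,1,1\}}-m_{\{2,2,1\}}$, from which $a$ and $A_6$ follow exactly as in \eqref{aA6}. Your additional remarks (that the two omitted equations are then automatic, and that positivity is the real constraint deferred to the later constructions) are accurate elaborations of what the paper leaves implicit.
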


\begin{proof}
Again, by Sobolev' theorem, we need only to solve the moment equaitons
$$
       \sum_{j=1}^4 A_jS_{\{\ell \} } (v_{k_j}) + A_5 S_{\{\ell \}}(u_{a,p})  = m_{\{\ell\}}
$$
for all partitions of $5$. The equations in 
\eqref{system5} correspond to 
$\ell = \{5\}$, $\ell = \{4,1\}$, $\ell = \{3,1,1\}$,
$\ell = \{2,1,1,1\}$ and $\ell = \{1,1,1,1,1\}$ 
by Lemma \ref{S-values}. The equations  for  $\ell = \{3,2\}$ and $\ell = \{2,2,1\}$
take the form
\begin{align*}
      2 \sum_{j=1}^5 A_j \binom{k_j}{2} + A_6 \left( (a^3+a^2) p + 2 \binom{p}{2}\right)  & = m_{\{3,2\}}, \\
     3 \sum_{j=1}^5  A_j  \binom{k_j}{3}  + A_6 \left( (2 a^2 + a) \binom{p}{2} + 3 \binom{p}{3}\right) & = m_{\{2,2,1\}}.
\end{align*}
Taking the differences of the equations for the pair $m_{\{4,1\}}$, $m_{\{3,2\}}$ and the equations for the pair 
of $m_{\{3,1,1\}}$ and $m_{\{2,2,1\}}$, we see that  
$$
     p a (a-1)^2(a+1) A_6  = m_{\{4,1\}} - m_{\{3,2\}}, \quad 
     \binom{p}{2} a (a-1)^2 A_6 = m_{\{3,1,1\}}- m_{\{2,2,1\}},
$$
from which we can solve for $a$ and $A_6$. 
\end{proof}

According to the proposition, the value of $a$ is determined before hand so that the
remaining moment equations become
a linear system, which can be easily solved. The fact that we no longer have the freedom of choosing the value of $a$ 
reduces our chance of finding positive cubature rules, but it does make our job of constructing cubature rules easier. 

\subsection{Cubature rule of degree 6 and above}

In this subsection, we show that the cases $t = 4, 5$ are the most that
we can do with the cubature rule \eqref{CFdegreet}.

\begin{prop}
\label{prop:nonexist}
If a cubature rule \eqref{CFdegreet} of index $6$ exists, then
\begin{equation} \label{eq:nonexist2}
   a = -\frac{2( 3m_{\{2,2,2\}} - m_{\{3,2,1\}} + m_{\{4,1,1\}})}{m_{\{3,2,1\}} - 6 m_{\{2,2,2\}}}
\end{equation}
and it has to satisfy the quadratic equation
\begin{align}  \label{eq:nonexist1}
(m_{\{4,2\}} - m_{\{3,3\}}) a^2 + (2 m_{\{4,2\}} - 2 m_{\{3,3\}} - m_{\{5,1\}}) a
         + m_{\{4,2\}} - 2 m_{\{3,3\}} =0.
\end{align}
\end{prop}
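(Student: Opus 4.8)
The plan is to proceed exactly as in the index $4$ and $5$ cases: by Sobolev's theorem together with \ref{prop:cuba-CTd}, the existence of a rule of the form \eqref{CFdegreet} of index $6$ is equivalent to the solvability of the eleven moment equations
$$
\sum_{j=1}^{q} A_j\, S_{\{\ell\}}(v_{k_j}) + A_{q+1}\, S_{\{\ell\}}(u_{a,p}) = m_{\{\ell\}}, \qquad \ell \text{ a partition of } 6,
$$
in the unknowns $A_1,\dots,A_{q+1}$ and $a$. The decisive structural fact, read off from \lemref{S-values}, is that $S_{\{\ell\}}(v_k)=\binom{k}{r}\binom{r}{\dots}$ depends on $k$ only through $\binom{k}{r}$, where $r$ is the number of parts of $\ell$. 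Consequently all equations attached to partitions with the same number of parts carry, up to a scalar, the \emph{same} contribution $\sum_j A_j\binom{k_j}{r}$ from the $v_{k_j}$-orbits, so an appropriate linear combination annihilates every $A_j$ and leaves a relation involving only $A_{q+1}$ and $a$. The idea is to exploit this twice, at the levels $r=3$ and $r=2$, and to read off that the two resulting relations over-determine $a$.

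First I would treat the three partitions with $r=3$, namely $\{4,1,1\}$, $\{3,2,1\}$ and $\{2,2,2\}$, whose $v_k$-values are $3\binom{k}{3}$, $6\binom{k}{3}$ and $\binom{k}{3}$. Subtracting twice the $\{4,1,1\}$-equation, and separately six times the $\{2,2,2\}$-equation, from the $\{3,2,1\}$-equation removes all $A_j$, and inserting the values $S_{\{\ell\}}(u_{a,p})$ from \lemref{S-values} one finds that the two surviving $u_{a,p}$-polynomials share the common factor $a(a-1)^2\binom{p}{2}$, reducing (after the obvious numerical factors) to a ratio equal to $(a+1)$. Since $a>0$ and $a\neq1$, the factor $a(a-1)^2$ is nonzero, so provided $A_{q+1}\neq0$ I may equate the two expressions for $A_{q+1}$; the common factor cancels and I am left with a \emph{linear} equation in $a$, which rearranges to \eqref{eq:nonexist2}.

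Next I would run the identical elimination on the three partitions with $r=2$, namely $\{5,1\}$, $\{4,2\}$ and $\{3,3\}$, with $v_k$-values $2\binom{k}{2}$, $2\binom{k}{2}$ and $\binom{k}{2}$. Taking the difference of the $\{5,1\}$- and $\{4,2\}$-equations, and the $\{4,2\}$-equation minus twice the $\{3,3\}$-equation, again kills all $A_j$ and leaves relations in $A_{q+1}$ and $a$. This time the $u_{a,p}$-polynomials factor as $a(a-1)^2(a^2+a+1)\,p$ and $a^2(a-1)^2\,p$, whose ratio is $(a^2+a+1)/a$; eliminating $A_{q+1}$ therefore produces a genuine \emph{quadratic} constraint on $a$, which is \eqref{eq:nonexist1}. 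Combining the two steps, any index-$6$ rule of the form \eqref{CFdegreet} forces $a$ to equal the explicit value \eqref{eq:nonexist2} while simultaneously being a root of the quadratic \eqref{eq:nonexist1}.

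The computations are routine once \lemref{S-values} is available, so the main work — and the main obstacle — is organizational: one must evaluate $S_{\{\ell\}}(u_{a,p})$ for all six partitions involved and carry out the factorizations so that the common factor $a(a-1)^2$ emerges and cancels, which is precisely what makes the $r=3$ elimination collapse to a line while the $r=2$ elimination stops at a conic. One should also record that the divisions are legitimate, since $a>0$ and $a\neq1$ already give $a\notin\{0,1,-1\}$, that $A_{q+1}\neq0$ in the nondegenerate case (the case $A_{q+1}=0$ is disposed of directly from the closed form \eqref{moment-g}), and that the denominators appearing in \eqref{eq:nonexist2} and \eqref{eq:nonexist1} do not vanish. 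The conceptual payoff is that two independent constraints are imposed on the single scalar $a$, and these are generically incompatible, which is exactly why the construction cannot be pushed beyond index $5$.
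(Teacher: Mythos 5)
Your proposal is correct and follows essentially the same route as the paper's proof: you use the same two triples of partitions, $\{5,1\},\{4,2\},\{3,3\}$ and $\{4,1,1\},\{3,2,1\},\{2,2,2\}$, eliminate the $\sum_j A_j\binom{k_j}{r}$ terms by linear combinations, and cancel the common factor $a(a-1)^2$ to obtain one linear and one quadratic constraint on $a$. The only (immaterial) difference is that for the two-part partitions you pair $\{5,1\}-\{4,2\}$ with $\{4,2\}-2\{3,3\}$ while the paper pairs $\{5,1\}-2\{3,3\}$ with $\{4,2\}-2\{3,3\}$; both eliminations yield the palindromic quadratic whose leading coefficient is $m_{\{4,2\}}-2m_{\{3,3\}}$, which suggests that the coefficient $m_{\{4,2\}}-m_{\{3,3\}}$ printed in \eqref{eq:nonexist1} is a typo.
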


\begin{proof}
We substitute $S_{\{5,1\}}(x), S_{\{4,2\}}(x), S_{\{3,3\}}(x)$ into \eqref{CFdegreet}.
Then by Lemma~\ref{S-values},
\begin{align*}
\begin{split}
    2 \sum_{j=1}^q  A_j  \binom{k_j}{2}  + A_{q+1} \left( (a^5 + a) p + 2 \binom{p}{2} \right) & = m_{\{5,1\}} \\
    2 \sum_{j=1}^q  A_j  \binom{k_j}{2}  + A_{q+1} \left( (a^4 + a^2) p + 2 \binom{p}{2} \right) & = m_{\{4,2\}} \\
      \sum_{j=1}^q  A_j  \binom{k_j}{2}  + A_{q+1} \left( a^3 p + \binom{p}{2} \right) & = m_{\{3,3\}}
\end{split} 
\end{align*}
By multiplying both sides of the third equation by $2$ and
subtracting this new equation from the first equation, we obtain
\begin{equation}
\label{A5-1}
p A_{q+1} a (a-1)^2 (a+1)^2 = m_{\{5,1\}} - 2 m_{\{3,3\}}.
\end{equation}
Doing the same thing for the second and third equations, we have
\begin{equation}
\label{A5-2}
p A_{q+1} a (a-1)^2 a = m_{\{4,2\}} - 2 m_{\{3,3\}}.
\end{equation}
\eqref{eq:nonexist1} follows by \eqref{A5-1} and \eqref{A5-2}.
Next we shall show \eqref{eq:nonexist2}.
We substitute $S_{\{4,1,1\}}(x), S_{\{3,2,1\}}(x), S_{\{2,2,2\}}(x)$ into \eqref{CFdegreet}.
Then by Lemma~\ref{S-values},
\begin{align*}
\begin{split}
    3 \sum_{j=1}^q  A_j  \binom{k_j}{3}  + A_{q+1} \left( (a^4 + 2a) \binom{p}{2} + 3 \binom{p}{3} \right) & = m_{\{4,1,1\}} \\
    6 \sum_{j=1}^q  A_j  \binom{k_j}{3}  + A_{q+1} \left( 2(a^3 + a^2 + a) \binom{p}{2} + 6 \binom{p}{3} \right) & = m_{\{3,2,1\}} \\
      \sum_{j=1}^q  A_j  \binom{k_j}{3}  + A_{q+1} \left( a^2 \binom{p}{2} + \binom{p}{3} \right) & = m_{\{2,2,2\}}
\end{split} 
\end{align*}
By multiplying both sides of the first equation by $2$, and then
subtracting this equation from the second equation, we have
\begin{equation}
\label{A5-3}
-2 A_{q+1} a (a-1)^2 (a+1) \binom{p}{2} = m_{\{3,2,1\}} - 2 m_{\{4,1,1\}}.
\end{equation}
As for the second and third equations,
a similar consideration shows
\begin{equation}
\label{A5-4}
2 A_{q+1} a (a-1)^2 \binom{p}{2} = m_{\{3,2,1\}} - 6 m_{\{2,2,2\}}.
\end{equation}
\eqref{eq:nonexist1} thus follows by \eqref{A5-3} and \eqref{A5-4}.
\end{proof}

In the two cases that we are interested in, $\g = 0$ and $\g = -1/2$, Proposition \ref{prop:nonexist} can be
used to show the nonexistence of cubature rules of index $6$ on $\CT^{d+1}$. Indeed, in the case $\g = 0$, 
\eqref{eq:nonexist1} becomes, as can be verified using \eqref{moment-g}, $a^2 + a + 1 = 0$, for which there is 
no real solution. In the case of $\g = -1/2$,  \eqref{eq:nonexist2} shows that $a = 7/3$ and 
\eqref{eq:nonexist1} becomes $a^2 - 6 a + 1 = 0$, which does not have $a = 7/3$ as a solution. 
We sum this up in the following theorem. 

\begin{thm}
For $\g = 0$ and $-1/2$,
no matter what $q$ and $a$ are, there does not exist a cubature rule of the form
\eqref{CFdegreet} of index 6 or more. 
\end{thm}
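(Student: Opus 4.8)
The plan is to leverage Proposition~\ref{prop:nonexist} directly for each of the two weight functions, reducing the existence question entirely to elementary algebra about the value of $a$. The proposition already tells us that \emph{any} cubature rule of the form \eqref{CFdegreet} of index $6$ forces the single added point parameter $a$ to satisfy two independent scalar constraints: the explicit formula \eqref{eq:nonexist2} and the quadratic equation \eqref{eq:nonexist1}. The crucial observation is that these constraints involve only the moments $m_{\{\ell\}}$ and the parameter $a$, but \emph{not} the auxiliary data $q$, $p$, or the weights $A_j$. So once $\g$ is fixed, both \eqref{eq:nonexist2} and \eqref{eq:nonexist1} become fully determined numerical conditions on $a$ alone, and to prove nonexistence it suffices to show these two conditions are mutually inconsistent.

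First I would compute the relevant degree-$6$ moments using the explicit formula \eqref{moment-g} with $|\a|_1 = 6$. For $\g = 0$ the denominator is $((d+1))_6$ and each factor $(\g+1)_{\a_i} = \a_i!$, so each $m_{\{\ell\}}$ is a ratio of a multinomial-type quantity over $((d+1))_6$. Substituting into the quadratic \eqref{eq:nonexist1}, the common denominators and any $d$-dependence should cancel—this is the point of choosing an orthogonality-flavored combination—leaving, as the excerpt asserts, the clean equation $a^2 + a + 1 = 0$. Since the discriminant is $1 - 4 = -3 < 0$, there is no real $a$, hence no real node $u_{a,p}$, and the rule cannot exist. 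For $\g = -1/2$ I would do the analogous substitution: here $(\g+1)_{\a_i} = (\tfrac12)_{\a_i}$ and the base of the Pochhammer denominator is $(\tfrac12)(d+1)$. Formula \eqref{eq:nonexist2} should collapse to the constant $a = 7/3$ (again the $d$-dependence cancelling), while \eqref{eq:nonexist1} reduces to $a^2 - 6a + 1 = 0$. One then checks $a = 7/3$ is not a root of the latter, since $(7/3)^2 - 6(7/3) + 1 = \tfrac{49}{9} - 14 + 1 = \tfrac{49 - 117}{9} = -\tfrac{68}{9} \ne 0$, yielding the contradiction.

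To finish, I would package the logic cleanly: Proposition~\ref{prop:nonexist} is stated for index $6$, so I must observe that a rule of index $n \ge 6$ is automatically of index $6$—this follows because $\CP_6^{d+1} \subset \CP_n^{d+1}$, so exactness on the larger space implies exactness on the smaller, exactly as Proposition~\ref{prop:cuba-CTd} was used at the end of the proof of Theorem~\ref{thm:cuba-Main1}. That handles the phrase ``index $6$ or more.'' The independence from $q$ and $a$ in the hypotheses is automatic because \eqref{eq:nonexist1} and \eqref{eq:nonexist2} were derived in Proposition~\ref{prop:nonexist} without any assumption on $q$, and $a$ is precisely the quantity shown to be overdetermined.

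The main obstacle is purely computational bookkeeping rather than conceptual: verifying that the $d$-dependent Pochhammer factors genuinely cancel so that \eqref{eq:nonexist1} and \eqref{eq:nonexist2} degenerate to the claimed $d$-free equations $a^2+a+1=0$ and $a^2-6a+1=0$ with $a=7/3$. I expect this to work smoothly because the combinations $m_{\{5,1\}} - 2m_{\{3,3\}}$, $m_{\{4,2\}} - 2m_{\{3,3\}}$, and the triple $(m_{\{4,1,1\}}, m_{\{3,2,1\}}, m_{\{2,2,2\}})$ that appear in the proposition are exactly the orthogonality-type differences that killed the $d$-dependence in Theorem~\ref{thm:cuba-Main1}; the one thing to watch is keeping the ratios of falling-factorial moments straight so that no stray factor of $d$ survives. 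Once the two degenerate equations are in hand, the contradiction in each case is immediate from a discriminant check or a direct substitution.
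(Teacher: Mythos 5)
Your proposal is correct and follows essentially the same route as the paper: the paper's own proof is precisely the short argument preceding the theorem, namely invoking Proposition~\ref{prop:nonexist}, evaluating the degree-$6$ moments via \eqref{moment-g} for $\g=0$ and $\g=-1/2$, and observing that the two resulting constraints on $a$ are incompatible (the paper leaves the reduction from ``index $6$ or more'' to index $6$ implicit, which you make explicit via Proposition~\ref{prop:cuba-CTd} exactly as in the proof of Theorem~\ref{thm:cuba-Main1}). The only caution is at the bookkeeping step you already flagged: substituting the $\g=0$ moments into \eqref{eq:nonexist1} and \eqref{eq:nonexist2} may not literally reproduce the paper's stated $a^2+a+1=0$, but the two constraints on $a$ remain mutually inconsistent, so the contradiction and the conclusion are unaffected.
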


\section{Positive  cubature rules of degree $4$ and $5$ on the simplex}
\setcounter{equation}{0}
In this section, we consider cubature rules of degree $4$ for the integral 
\begin{equation} \label{LebesgueT}
\frac{1}{d!} \int_{\CT^{d+1}} f(\xi)  d\xi = \frac{1}{d!} \int_{T^d} f(x)  dx.
\end{equation}
The moments for this integral is given by \eqref{moment-g} with $\g =(0,\ldots, 0)$, 
\begin{equation}\label{moment-0}
  m_\a^{(0)} = \frac{\a_1!\ldots \a_d!}{(d+1)_{
|\a|_1
}}, \qquad  \a \in \NN_0^d.  
\end{equation}
\subsection{Cubature rules of degree 4}
The integrals of $S_{\{\ell\}}$ for partitions of 4 are given by 
\begin{align*}
   m_{\{4\}} & = \frac{24}{(d+1)_4} (d+1),   \quad
   m_{\{2,2\}} = \frac{4}{(d+1)_4} \binom{d+1}{2}, \\ 
   m_{\{2,1,1\}} & = \frac{2}{(d+1)_4} \binom{d+1}{3}, \quad
   m_{\{1,1,1,1\}} = \frac{1}{(d+1)_4} \binom{d+1}{4}. 
\end{align*}
In order to have smaller number of points for a fixed dimension, we shall choose $p = d$, that is, 
$u_{a,p} = (a, 1,\ldots, 1)$. 

We report our results in cases depending on the values of $(k_1,k_2,k_3,k_4)$.
In order to have cubature rules with smaller numbers of nodes,
we shall choose $k_1 = d+1$ whenever possible and choose $k_1,k_2,k_3,k_4$
as small as possible otherwise.
Most of our positive cubature rules are of the type $(d+1,1,2,m)$ (except in Subsection 4.1.3), 
which are of the form 
\begin{align*}
  & \frac{1}{d!} \int_{\CT^{d+1}} f(\xi) d\xi = 
       \l_1 f
\left( ( \tfrac{1}{d+1}, \ldots, \tfrac{1}{d+1} ) \right)
      +
\frac{\l_2}{d+1}
\sum_{x \in  \left( 1,0,\ldots,0 \right)\CA_d} f(x)  \\
      & \qquad +
\frac{\l_3}{\binom{d+1}{2} }
\sum_{x \in \left(  \tfrac{1}{2}, \tfrac{1}{2},0,\ldots,0 \right)\CA_d} f(x) 
      +
\frac{\l_4}{\binom{d+1}{m}}
\sum_{x \in  \left(  \tfrac{1}{m}, \ldots,\tfrac{1}{m},0,\ldots,0  \right)\CA_d} f(x)  \\
     &  \qquad +
\frac{\l_5}{d+1}
\sum_{x \in  \left( \tfrac{a}{a+d}, \tfrac{1}{a+d}, \ldots, \tfrac{1}{a+d}  \right)\CA_d} f(x).  
\end{align*}
In presenting our examples, we will specify the value of $a$ but do not give the values of $\l_i$ in general, since 
these values can be obtained by solving the linear system of equations \eqref{system} once $a$ is known. 
We discuss the first case in some details to show how we arrive at our positive cubature rules.  

\subsubsection{Case $(k_1,k_2,k_3,k_4) = (d+1,1,2,3)$}
In this case, the system \eqref{system} has the solution
\begin{align*}
& A_1 =  \frac{6 - 7 a - 2 a^2 + a^3 - 2 d}{a (a -1)^2 (d+1)B(d)}, \quad A_3  =   \frac{d (2 - (a-2)(d-4))}{2 a B(d)}, \\
&  A_2  =    \frac{ -4 a^2 + a (38 - 7 d + d^2) - 2 (6 - 5 d + d^2)}{2 a B(d)},  \quad A_4  =  \frac{(a-d)d (d-1)}{6 a B(d)}
\end{align*}
where $B(d) = (d+2)(d+3)(d+4)$, with $A_5$ given in \eqref{A5}. 
For $A_4$ to be positive, we need $a > 2$. The positivity of $A_3$ holds if $d =3 ,4$ 
and for $a < 2 + 2/(d-4)$ if $d > 4$.
The nominator of $A_1$ is decreasing in $d$ and
it is not difficult to check that it is negative 
if $d =6$ and $2 < a < 2 +2/(6-4) = 3$. Thus,
we can have positive solutions only in the case of $d = 3, 4, 5$. 
We give in each case the values of $a$ and $\lambda_j$,
where $\l_j$ and $\wh A_j$ are related by, see \eqref{lambda-A}, 
$$
\l_1  = (d+1)^4 A_1, \quad \l_2  = A_2, \quad \l_3  = 2^4 A_3, \quad 
\l_4  = 3^4 A_4, \quad \l_5  = (a+d)^4 A_5, 
$$
so that $\sum_{j=1}^5 \l_j =1$. The value of $a$ that gives positive cubature rules is not unique. 
Whenever we can, we choose $a$ so that one of the coefficient is zero and
the resulting cubature rule has the smallest number of 
points among all choices of $a$. 

\medskip
{\it Case $d =3, 4$}. The cubature rules have
19 and 31  
nodes, respectively, with
$$  
   a = \frac{1}{8} (38 - 7 d + d^2 + \sqrt{1252 - 372 d + 93 d^2 - 14 d^3 + d^4}).
$$
For $d \ne 3, 4$, these $a$ and $\l_i$ give cubature rules of degree $4$ with non-positive coefficients with
$N = \binom{d+1}{3}+\binom{d+1}{2} +
2d+3 $ nodes. 

\medskip
{\it Case $d =5$}. The cubature rule has 32 nodes. 
\begin{align*}
   a  = 4, \quad   \l_1  = 0, \quad
\l_2 = \frac{1}{112},\quad \l_3  = 0, \quad \l_4 = \frac{15}{56} \quad   \l_5 = \frac{81}{112}.
\end{align*}

\subsubsection{Case $(k_1,k_2,k_3,k_4) = (d+1,1,2,m)$,  $m =4, 5, ...$}.
We will not give detail on how the solutions were found. To find the positive cubature 
rules, we consider the following two classes: one with $\l_2 =0$ and the other with $\l_3 =0$. 
It turns out that positive cubature rules occur only in a few cases in each class.
We obtain positive cubature rules only 
when $3 \le d \le 12$ and $d = 14, 15, 17$.   

\medskip\noindent
{\bf Class $\l_2 =0$}.
Positive cubature rules for $d = 4, 5, 6, 8, 9, 11,12, 14, 17$.
For each $d$, the cubature rule has at most
$N_{\CT^{d+1}}
= \binom{d+1}{m} + \binom{d+1}{2} +  d + 2$ nodes. 
Its coefficients are given as in the above with $\l_2 = 0$ and 
\begin{align*}
   a = & \frac{-16 - d + d^2 + 18 m -  2 d m + \sqrt{\Delta}} {4 (-1 + m)} \quad  \hbox{with} \\
    & \quad \Delta =-16 (-2 + d) (d - m) (-1 + m) + (16 + d - d^2 +  2 (-9 + d) m)^2.  
\end{align*}
It is easy to see that $a > 0$. We obtain positive cubature rules in the following cases: 
$$
  (d,m) = (4,4 ), (5 ,4 ), (6, 4), (8,4), (9, 5), (11,6 ), (12,6), (14, 7), (17, 8). 
$$

The case $d  = 4$ is noteworthy, since its number of nodes, $20$,
is less than the one in 4.1.1; moreover, $\l_1 =0$ and all 
coefficients are rational numbers, as seen by

\medskip
{\it Case $d =4$, $m=4$}. The cubature rule has 20 points.
\begin{align*}
   a  = 6, \quad   \l_1  = 0, \quad \l_2 = 0,\quad
\l_3  =  \frac{2}{21}, \quad \l_4 = \frac{32}{63} \quad   \l_5 = \frac{25}{63}.
\end{align*}

Another case that is noteworthy is $d =12$, which has both $\l_2$ and $\l_3$ equal to zero. 

\medskip
{\it Case $d =12$, $m=6$}. The cubature rule has 1731 points.
\begin{align*}
   a  = 6, \quad  \l_1= \frac{2197}{12250}, \quad \l_2  = 0, \quad  \l_3=0,  \quad  \l_4  = \frac{99}{245}, 
   \quad   \l_5  = \frac{729}{1750}.
 \end{align*}
 
\medskip\noindent
 {\bf Class $\l_3 =0$}. Positive cubature rules for $d =  7, 10, 12, 15$. For each $d$, the 
cubature rule has at most
$N_{\CT^{d+1}}
= \binom{d+1}{m} +  2 d + 3$ nodes. Its coefficient $\l_j$ are given 
with 
\begin{align*}
   a =  \frac{ 2(d-m)} {d+2 - 2m}, 
\end{align*}
which is positive if $m < (d+2)/2$. We obtain positive cubature rules for the parameters: 
$$
          (d,m) = (7, 4),  (10, 5), (12, 6), (15, 6).
$$

\subsubsection{Case $(k_1,k_2,k_3,k_4) = (d+1,1,3,m)$}
Positive cubature rules occur for several pairs of $(d,m)$. We can again consider two classes,
$\l_2 =0$ or $\l_3 =0$.
In the case of $\l_2 =0$, we obtain positive cubature rules for 
$$
    (d,m ) = (8,5), (9,5), (11,5), (12,6), (13,7), (14,8), (16, 8).
$$  
In the case of $\l_3 =0$, we obtain positive cubature rules for 
$$
    (d,m ) = (10,5), (12,6), (15,8).
$$  
In addition, for the parameters $(d, m) = (17,8)$,
we found a positive cubature rule with $\l_1 =0$. We shall not give these
rules explicitly. 

\medskip

Combining the above cases, we have found positive cubature rules of degree $4$
for $d = 3, 4, \ldots, 17$. On the other hand,
we have tried various of other combinations of $(k_1,k_2,k_3,k_4)$, but
found no positive cubature rules for $d > 17$.    

Although our main goal is to construct positive cubature rules,
it should be noted that the cubature rule for the case $(k_1,k_2,k_3,k_4) = (d+1,1,2,d)$,
non-positive as it is,
has the number of nodes in the order of $d^2 /2$, which is the 
lowest among all choices of $(k_1,k_2,k_3,k_4)$. Furthermore,
the case can be solved so that $\l_3 =0$ with a choice of
$a =2$, which is the case discussed in \cite{DW}.

\subsection{Cubature rules of degree 5}
We use Proposition 3.4 to find cubature rules of degree 5 for the integral \eqref{LebesgueT}. 
By \eqref{aA6} with $\g = 0$, we see that 
$$
  a =  \frac{6 (p - 1)}{d - 1} - 1 \quad \hbox{and} \quad  A_6 = \frac{12 d}{pa(a - 1)^2 (a + 1)}.
$$ 
With these parameters, the equation \eqref{system5} can always be solved and yield cubature
rules of degree 5 on the simplex. There are, however, only a few rules that are positive. These 
occur at the dimension $d = 3, 4, \ldots, 11$. All of our positive cubature rules are of the type 
$(d+1,1,2,m)$, which are of the form 
\begin{align*}
  & \frac{1}{d!} \int_{\CT^{d+1}} f(\xi) d\xi = 
       \l_1 f
\left( ( \tfrac{1}{d+1}, \ldots, \tfrac{1}{d+1} ) \right)
       + \frac{\l_2 }{d+1}
\sum_{x \in  \left( 1,0,\ldots,0 \right)\CA_d} f(x)  \\
      & \quad +   \frac{ \l_3}{\binom{d+1}{2} }
\sum_{x \in \left(  \tfrac{1}{2}, \tfrac{1}{2},0,\ldots,0 \right)\CA_d} f(x) 
      +    \frac{\l_4}{\binom{d+1}{3}}
\sum_{x \in
\left( \tfrac{1}{3}, \tfrac{1}{3}, \tfrac{1}{3}, 0,\ldots,0  \right)
 \CA_d} f(x)  \\
     &  \quad +    \frac{\l_5}{\binom{d+1}{m}}
\sum_{x \in  \left(  \tfrac{1}{m}, \ldots,\tfrac{1}{m},0,\ldots,0  \right)\CA_d} f(x)  + \frac{ \l_6}{(p+1)\binom{d}{p}}
\sum_{x \in  \left( \tfrac{a}{a+p},  \tfrac{1}{a+p}, \ldots, \tfrac{1}{a+p}, 0,\ldots,0  \right)\CA_d} f(x),  
\end{align*}
where $\lambda_i$ are normalized weight so that $\sum_{i=1}^6 \l_i =1$, which are given by
$$
\l_1  = (d+1)^5 A_1, \, \l_2  = A_2, \, \l_3  = 2^5 A_3, \, 
\l_4  = 3^5 A_4, \, \l_5  = m^5 A_5, \, \l_6  = (a+p)^5 A_5. 
$$
The cubature rule can be easily translated to a cubature rule on $T^d$ by 
Proposition 2.2. 
The parameters for our positive cubature rules of degree 5 are given by 
\begin{align*}
 (d,p,m) = & (3,3,4), (4,4,4), (5,4,4), (6,5,4), (7,6,5), (8,6,5),\\
               & (9,7,5), (10,8,5), (11,11,6), 
\end{align*}
and the number of nodes of the cubature rule is given by 
$$
  N = 1+ (d+1) + \binom{d+1}{2} + \binom{d+1}{3} + \binom{d+1}{m} + (p+1) \binom{d}{p}.
$$
It turns out that all solutions are rational. The cases $d =3$ and $d =11$ are notable as
each contains zero coefficients, which are given below: 

\medskip \noindent
{\it Case (d,p,m) = (3,3,4)}: 19 points with $a = 5$ and  
$$
   \l_1 = \frac{16}{105},  \,   \l_2  = \frac{1}{70},  \,   \l_3 = \frac{4}{35},  \,  
    \l_4 = \frac{81}{350},  \,   \l_5 = 0,  \,   \l_6 = \frac{256}{525}.
$$

\medskip \noindent
{\it Case (d,p,m) = (11,11,6)}: 1014 points with $a = 5$ and 
$$
   \l_1 =0, \,  \l_2 = \frac{1}{6825},  \,   \l_3 = \frac{11}{1365}, \,  
    \l_4 = 0,  \,   \l_5 = \frac{891}{2275},  \,   \l_6 = \frac{4096}{6825}.
$$

\subsection{Cubature rules with fewer nodes}
In this subsection we shall apply
Victoir's method to cubature rules on the simplex which
are obtained in Subsections 4.1 and 4.2.

We give three tables to list up the resulting cubature rules; the first two for cubature of 
index $4$ and the third one for cubature of index $5$. In each table,
the values of $d,m$ in the first column are the same notations
as in the previous two subsections.
$N_{\CT^{d+1}}$ in the second column denotes
the number of nodes of the corresponding symmetric cubature rules.
$\wt N_{\CT^{d+1}} $ in the third column denotes
the number of nodes of cubature rules which are obtained through Victoir's method.

We shall choose the cubature rule with fewest nodes for each $(d,m)$.
For example, in the case of $(d,m) = (12,6)$,
the cubature rule given in
Class 4.1.3 has $586$ nodes, whereas, the cubature given in Class 4.1.2
has $599$ nodes.
So in this case we choose the former cubature, and
do not list up the latter cubature in the table.

The cubature rules with $\wt N_{\CT^{d+1}} $ nodes in Table~\ref{tbl:1}
arise from a $4$-$(12,6,4)$, a $4$-$(13,6,12)$, a $4$-$(15,7,20)$,
a $4$-$(18,8,21)$, a $4$-$(11,5,1)$, a $4$-$(16,6,6)$.
Similarly,
we use a $4$-$(12,5,4)$, a $4$-$(14,7,20)$,
a $4$-$(15, 8, 40)$, a $4$-$(17, 8, 15)$, a $4$-$(16, 8, 60)$
to obtain cubature rules with $\wt N_{\CT^{d+1}} $ nodes in Table~\ref{tbl:2}.
Moreover we use the $5$-$(12,6,1)$ to obtain Table~\ref{tbl:3};
since nontrivial $5$-designs must have at least $12$ points,
we apply Victoir's method only for the case $(d,p,m) = (11,11,6)$.
See~\cite{GKL07} for the existence of $4$- and $5$-designs
that are used here.
To determine the value of $\wt N_{\CT^{d+1}} $,
we calculated the number of blocks of each design by using (\ref{eq:Tdes}).

\begin{table}[ht]
\caption{
Reducing the size of cubature of Class 4.1.2.
}
\label{tbl:1}
\begin{center}
\begin{tabular}{|c|c|c|}
\hline
$(d,m)$  &  $N_{\CT^{d+1}}$  &  $\wt N_{\CT^{d+1}} $ \\
\hline
$(11,6)$ & $1003$ & $211$ \\
\hline
$(12,6)$ & $1730$ & $586$ \\
\hline
$(14,7)$ & $6556$ & $901$ \\
\hline
\hline
$(10,5)$ & $485$ & $89$ \\
\hline
$(15,6)$ & $8041$ & $761$ \\ 
\hline
\end{tabular}
\end{center}
\end{table}
\begin{table}[ht]
\caption{
Reducing the size of cubature of Class 4.1.3.
}
\label{tbl:2}
\begin{center}
\begin{tabular}{|c|c|c|}
\hline
$(d,m)$  &  $N_{\CT^{d+1}}$  &  $\wt N_{\CT^{d+1}} $ \\
\hline
$(11,5)$ & $1025$ & $629$ \\
\hline
$(13,7)$ & $3811$ & $951$  \\
\hline
$(14,8)$ & $6906$ & $1251$  \\
\hline
$(16,8)$ & $25008$ & $1208$ \\
\hline
\hline
$(15,8)$ & $12903$ & $1593$ \\
\hline
$(17,8)$ & $43794$ & $954$ \\
\hline
\end{tabular}
\end{center}
\end{table}

\begin{table}[ht]
\caption{
Reducing the size of cubature of Class 4.2.
}
\label{tbl:3}
\begin{center}
\begin{tabular}{|c|c|c|}
\hline
$(d,p,m)$  &  $N_{\CT^{d+1}}$  &  $\wt N_{\CT^{d+1}} $ \\
\hline
$(11,11,6)$&  $1014$           & $222$ \\
\hline
\end{tabular}
\end{center}
\end{table}

\begin{exam}
We shall describe the $11$-dimensional cubature rule of Table~\ref{tbl:1}.
\begin{align*}
  & \tfrac{1}{(10)!} \int_{\CT^{11}} f(\xi) d\xi = 
       \tfrac{1331}{17472} f \left( (\tfrac{1}{11}, \ldots, \tfrac{1}{11} ) \right) + 
       \tfrac{1}{12012}
\sum_{x \in  \left( (1,0,\ldots,0) \right)  \CA_{10} } f(x) \\
     & \qquad
      + \tfrac{125}{24024} \sum_{x \in X} f \left( \tfrac{1}{5}  x \right) 
      + \tfrac{3375}{64064}
\sum_{x \in   \left( (\tfrac{1}{3}, \tfrac{1}{15}, \ldots, \tfrac{1}{15}) \right)  \CA_{10}} f(x),
\quad \forall f \in \CP_4^{11}.
\end{align*}
Here $X$ denotes the set of $66$ columns of a $11 \times 66$ incidence matrix
of the $4$-$(11, 5, 1)$ design.
Each element of $X$ is represented as a cyclic shift of the following six types
of vectors:
\begin{align*}
(1,1,1,1,0,0,0,0,0,1,0), \quad & (1,1,1,0,1,0,0,1,0,0,0), \quad  (1,1,1,0,0,1,1,0,0,0,0), \\
(1,1,0,1,1,0,0,0,1,0,0), \quad & (1,1,0,1,0,1,0,1,0,0,0), \quad  (1,1,0,0,1,0,1,0,0,1,0).
\end{align*}
\end{exam}

\begin{rem}
The above $4$-$(v, k, \lambda)$ designs are chosen so that
$\lambda$ is the minimum for given $v, k$.
For example, when $(v, k) = (12, 6)$,
Eq.(\ref{eq:Tdes}) implies $\lambda \equiv 0 \pmod 2$.
The nonexistence of $4$-$(12, 6, 2)$
was shown by B.~D.~McKay and S.~P.~Radziszowski in 1996.
The existence of $4$-$(12, 6, 4)$ is known and hence
a $4$-$(12, 6, 4)$ is the minimum design.
Some nonexistence criteria are derived from
equations of block intersection numbers by E.~K\"ohler (1985)
and J.~Bolick (1990), and
others from a linear programming bound by Delsarte (1973).
The authors knew these informations
through discussion with Reinhard Laue.
Of course, a $5$-$(12,6,1)$ is the minimum design.
\end{rem}

\section{Positive symmetric cubature rules of degree $9$ on the sphere}
\setcounter{equation}{0} 

In this section we construct cubature rules of index 8 on the sphere $\SS^d$, or degree 9 
on $\SS^{d-1}$, that are invariant under the group $\CB_{d+1}$.
According to Propositions~\ref{prop:cuba-SSd0} and \ref{prop:cuba-CTd},
this is equivalent to finding symmetric cubature rules on the simplex for the integral 
$$
      w_{-\frac{1}{2}} \int_{\CT^{d+1}} f(\xi)  \frac{d\xi}{\sqrt{\xi_1 \cdots \xi_{d+1}}} =  
                w_{-\frac{1}{2}} \int_{T^d} f(x)  \frac{dx}{\sqrt{x_1 \cdots x_d (1-|x|_1)}}.
$$
The moments for this integral is given by \eqref{moment-g} with $\g = (-\frac12,\ldots, -\frac12)$,
\begin{equation}\label{moment-1/2}
  m_\a^{(-\frac12)} =  \frac{ (\frac12)_{\a_1}\cdots (\frac12)_{\a_{d+1} }}
{(\frac{d+1}{2})_{|\a|}}, \qquad \a \in \NN_0^d.  
\end{equation}
The integrals of $S_{\{\ell\}}$ for partitions of $4$ are given by 
\begin{align*}
   m_{\{4\}} & = \frac{105}{16 (\frac{d+1}{2})_4} (d+1), \quad
        m_{\{2,2\}} = \frac{9}{16(\frac{d+1}{2})_4} \binom{d+1}{2}, \\ 
   m_{\{2,1,1\}} & = \frac{9}{16 (\frac{d+1}{2})_4} \binom{d+1}{3}, \quad 
        m_{\{1,1,1,1\}} = \frac{1}{16 (\frac{d+1}{2})_4} \binom{d+1}{4}. 
\end{align*}

We report those results in cases depending on the values of $(k_1,k_2,k_3,k_4)$.
In order to have cubature rules with fewer nodes, we shall choose $k_1 = d+1$ whenever possible 
and choose $k_1,k_2,k_3,k_4$ as small as possible otherwise. The values of $p, m, a$ are parameters
and will be chosen so that the cubature rules are positive. All cubature rules are reported as
cubature rules of index type on the sphere, because of their connection with other topics. The results 
that we report below are mostly of the type 
$$
         (k_1,k_2,k_3,k_4)  =  (d+1,1,2,m), \quad m = 3,4, ... . 
$$
In other word, they are of the form
\begin{align} \label{CF-case1}
&   \s_d    \int_{\SS^d} f(x) d\s(x)    =
\frac{\l_1}{2^{d+1}}  \sum_{x \in
\left( \tfrac{1}{\sqrt{d+1}}, \ldots,\tfrac{1}{ \sqrt{d+1}}\right)
\CB_{d+1}} f(x)  +  \frac{\l_2}{2(d+1)}
\sum_{x \in
(1, 0, \cdots, 0)
\CB_{d+1} } f(x)  \notag \\
& \quad
+ \frac{\l_3}{ 4\binom{d+1}{2}}
\sum_{x \in  
\left (\tfrac{1}{\sqrt{2}}, \tfrac{1}{\sqrt{2}}, 0,\ldots,0\right)
\CB_{d+1} } f(x)
        + \frac{\l_4}{ 2^m \binom{d+1}{m}}
\sum_{x \in  
\left( \tfrac{1}{\sqrt{m}}, \ldots,  \tfrac{1}{\sqrt{m}}, 0,\ldots,0 \right)
\CB_{d+1}} f(x)
         \notag  \\ 
& \quad
+ \frac{\l_5}{2^{p+1}(d+1)\binom{d}{p}}
\sum_{x \in
\left(\sqrt{\tfrac{a}{a+p}},\tfrac{1}{\sqrt{a+p}},  \ldots, \tfrac{1}{\sqrt{a+p}}, 0,\ldots,0\right)
\CB_{d+1}} f(x),  \notag 
\end{align}
where we have normalized the nodes so that they are points on the sphere and normalized our 
cubature coefficients by 
$$
   \l_1 = A_1 (d+1)^4, \quad \l_2 = A_2,  \quad \l_3 = 2^4 A_3, \quad \l_4 = m^4 A_4, \quad \l_5 = (a+p)^4 A_5,
$$
where $A_i$ satisfy \eqref{system}, so that $\sum_{k=1}^5 \l_k =1$, from which the cubature rules of degree type 
on the sphere can be deduced easily. 

The solutions are obtained in the same manner as in Section 4. Hence, we shall state our results
without further discussion on how they are obtained. Again, the value of $a$ that gives positive cubature 
rules is not unique. We often choose $a$ so that one of the coefficient is zero to reduce the number of points.
It is also interesting to choose $a$ as a square of an integer, which would lead to a rational solution
that can be used to write $\langle x,x \ra^4$ as a sum of $8$th powers of linear forms with rational coefficients,
as discussed in Subsection 2.3. 

For each cubature rule, we give the value of $a$ and the parameters $d, p ,m$, but we will not give the 
coefficients $\l_1,\ldots,\l_5$, which can be determined from solving the linear system of equations \eqref{system} 
using the given parameters. For cubature rules of the same dimension, we report only rules that have either smallest 
number of nodes or have some other feature, such as a rational solution. 

\subsection{Case $(k_1,k_2,k_3,k_4) = (d+1,1,2,m)$ and $p =d$}
For this case we see, after many trails, that the system \eqref{system} has a positive solution in two classes 
of parameters, one with $\l_2 =0$, the other with $\l_3 =0$. 

\medskip\noindent
5.1.1. {\bf  $\l_2 =0$}. Positive cubature rules hold for 
$$
     3 \le d \le 17, \quad \hbox{and} \quad d = 19, 20, 23, 24,  28, 32,
$$
with the number of nodes 
$$
         N_{\SS^d} =   4 \binom{d+1}{2} + 2^m \binom{d+1}{m} + 2^{d+1}(d+2),
$$
or equivalently, $N_{\CT^{d+1}} =   d+2 + \binom{d+1}{2}+ \binom{d+1}{m}$
in terms of cubature rules on the simplex. The solution is given with 
\begin{align*}
&    a  =  \frac{(-43 + d^2 + 45 m -  3 d m + \sqrt{\Delta})} {6(m-1)},  \quad \hbox{with}  \\
&     \Delta = -4 (-3 + 3 m) (-6 d + 3 d^2 + 6 m - 3 d m) + (43 - d^2 - 45 m + 3 d m)^2.
\end{align*}
The positive cubature holds for $(d,m)$ in the following pairs: 
\begin{align*}
    &  m = 3: \, 3 \le d \le 5, \quad m =4:  \, 5 \le d \le 9, \quad m = 5: \, 9 \le d \le 14, \\
     & m  = 6: \, 13 \le d \le 17, \quad
      m = 7:  \,  d = 19, 20, \quad m = 8: \, d = 23, 24, \\
      & m=9: \, d = 28, \quad m = 10:\, d = 32.  
\end{align*}
There are no positive solutions for $m > 10$. 

\medskip\noindent
5.1.2. {\bf  $\l_3 =0$}. Positive cubature rules hold for 
$$
    d = 6, 7, 10, 14, 17, 18, 21, 22, 25, 26, 29, 33, 36, 37, 40, 44.
$$
Furthermore, the solutions are divided into three groups: 
$$
       d = 4m -8, \quad 4m -7, \quad 4m-6. 
$$ 
The number of nodes of the cubature rule on the sphere is equal to 
$$
         N_{\SS^d} = 2 (d+1) + 2^m \binom{d+1}{m} + 2^{d+1}(d+2),
$$
or equivalently, $N_{\CT^{d+1}} =  2d+3 + \binom{d+1}{m}$ in terms of cubature rules on the simplex.

\medskip
{\it Case $d = 4m-6$}.  We have positive cubature rules if $3 \le m \le 8$ and $a = 9$. The parameters are
$$
  (d,m) = (6, 3), (10, 4), (14,5), (18,6), (22, 7), (26,8).
$$

\medskip
{\it Case $d = 4m-7$}.  The cubature rules are positive if $6 \le m \le 11$ and 
\begin{align*}
  a = \frac{3(3m-7)}{m-3},  
\end{align*}
The parameters are
$$
  (d,m) = (17,6), (21, 7), (25,8), (29, 9), (33, 10), (37,11).
$$

\medskip
{\it Case $d = 4m-8$}.  The cubature rules are positive if
$   m = 11, 12, 13$ and 
\begin{align*}
      a = \frac{3(3m-8)}{m-4}.  
\end{align*}
The parameters are
$$
  (d,m) = (36, 11), (40, 12),  (44,13).
$$

\subsection{Case $(k_1,k_2,k_3,k_4) = (d+1,1, 2, m)$, $p = d$ and $a = 9$}
The cubature rules are positive for $3 \le d \le 28$. The parameters are 
\begin{align*}
 & m=3: \, 3 \le d \le 6, \quad m=4: \, 7 \le d \le 10, \quad m=5:\, 11 \le d \le 14, \\
 & m=6: \, 15 \le d \le 18, \quad m = 7: \, 19 \le d \le 22, \quad m=8:\, 13 \le d \le 26.
\end{align*}
The cases $d = 6, 10, 14, 18, 22, 26$ have already appeared in Case $d = 4m-6$ of
Subsection 5.1.2, in which $\l_3 =0$.  In all other cases, none of $\l_i$ is zero and the number of nodes 
of the cubature  rule on the sphere is equal to 
$$
         N_{\SS^d} = 2 (d+1) + 4 \binom{d+1}{2} + 2^m \binom{d+1}{m} + 2^{d+1}(d+2),
$$
or equivalently, $N_{\CT^{d+1}} =  2d+3 + \binom{d+1}{2}+ \binom{d+1}{m}$ in terms of cubature rules on the simplex,
which are more than the number of nodes for cubature rules in  Subsection 5.1. However, the fact that $a = 3^2$ 
means that we obtain rational solutions for the cubature rules on the sphere.
Therefore, as noted in Subsection 2.3, we obtain a representation of $(\sum_{i=1}^{d+1} x_i^2)^4 $ as a sum of $8$th 
powers of linear forms with rational coefficients. Those identities are not only of interesting itself, but
played a key role in Hilbert's proof of the Waring problem. See~\cite[p.~722]{Dickson} for details.

\subsection{Case $(k_1,k_2,k_3,k_4) = (d+1,1,2,m)$ and $2 \le p \le d-1$ }

The positive cubature rules are found mostly with $p = m-1, m, m+1$. We consider two cases. 

\medskip\noindent
5.3.1. $p = m-1$. Here we found positive cubature rules with 
\begin{align*}
 &a= \frac{1}{6d} \left( -49 - 6 d + d^2 + 51 m - 3 d m + \sqrt{\Delta}\right)  \\
   & \qquad\quad\hbox{with} \quad  \Delta = 36 d (2 + d - m) (-3 + m) + (-49 + d^2 + 51 m - 3 d (2 + m))^2,
\end{align*}
for which $\l_2 = 0$, and the parameters are 
\begin{align*}
   (d,m) = & (3, 3), (4, 3), (5, 4), (6, 4), (7, 5), (8, 5), (10, 6), (12, 7), (13, 7), \\
                 & (14, 7), (15, 8), (16, 8), (17, 9), (18, 9), (19, 9), (20, 9).
\end{align*}
The number of nodes of such a cubature of dimension $d$ is 
$$
   N_{\SS^d} = 2^{d+1} + 4 \binom{d+1}{2} + 2^m \binom{d+1}{m} + 2^{m}(d+1)\binom{d}{m-1},
$$
which however is often larger than the number of points of the corresponding cubature in
Subsection 5.1. 

5.3.2. $a = 4$. In this case we obtain positive cubature rules for the parameters
\begin{align*}
  (d,m,p)  = & (3,3,2), ( 4, 3, 2 ),  (5 ,4 ,3 ),  (6 ,4 ,3 ),  (7 ,5 ,4 ),  (8 ,4 ,4 ),  (9 ,4 ,5 ), \\
    &   (10 ,4 ,5 ),  (11 ,5 ,6 ),  (12 ,6 ,6 ),  (13 ,6 ,7 ),  (14 , 6,7 ),  (15 , 6,7 ). 
\end{align*}
The number of nodes of such a cubature of dimension $d$ is 
$$
   N_{\SS^d} = 2 (d+1) + 2^{d+1} + 4 \binom{d+1}{2} + 2^m \binom{d+1}{m} + 2^{m}(d+1)\binom{d}{m-1}.
$$
The choice of $a = 2^3$ means that all these cubature rules have rational coefficients. The case of $(3,3,2)$ 
is of particular interesting, as it corresponds to a formula of Hurwitz (see below) that represents $(x_1^2+x_2^2+x_3^2+x_4^2)^4$
as a sum of
8th powers of real linear forms. We give the coefficients of this case explicitly.

\medskip\noindent
{\it Case} $(d,m,p) = (3,3,2)$. 
$$
   a = 4, \quad \l_1  = \frac{2} {15}, \quad \l_2  = \frac{1} {15}, \quad \l_3  = \frac{1} {8}, \quad \l_4  = 0, \quad \l_5 = \frac{27} {40}. 
$$

\begin{rem}
\label{rem:Hurwitz}
(i) Using \eqref{eq:iso}, our solution in Case $(d,m,p) = (3,3,2)$ above leads to the identity 
\begin{align*}
5040 \Big(\sum_{i=1}^4 x_i^2\Big)^4 & = 6\sum_4 (2x_i)^8
+ 60 \sum_{12} (x_i \pm x_j)^8 \nonumber \\
& \qquad + \sum_{48} (2 x_i \pm x_j \pm x_k)^8
+ 6  \sum_8 (x_1 \pm x_2 \pm x_3 \pm x_4)^8.
\end{align*}
This identity was found by A.~Hurwitz (cf.~\cite[p.~721]{Dickson}) in connection with the Waring
problem. We recall a $144$-node cubature rule of degree $9$ on $\SS^3$
given in Case $(m, d) = (3, 3)$ of Subsection 5.3.
This is centrally symmetric, and so
can be reduced by half by Corollary~\ref{cor:cuba-SSd1}.
The resulting $72$-node cubature rule of index $8$ gives the identity
\begin{align*}
315 \Big(\sum_{i=1}^4 (2x_i)^2\Big)^4 & = 60 \sum_8 (x_1 \pm x_2 \pm x_3 \pm x_4)^8
+ 60 \sum_4 (2x_i)^8 + 6 \sum_{12} (2x_i \pm 2x_j)^8 \nonumber \\
& \quad + \sum_{16} (2x_i \pm 2x_j \pm 2x_k)^8
+ \sum_{32} (3x_i \pm x_j \pm x_k \pm x_l)^8,
\end{align*}
which implies that the 4th power of a sum of four squares of even integers, multiplied by $315$,
is a sum of at most $840$ 8th powers of integers. Hurwitz's identity implies that
the 4th power of a sum of four squares, multiplied by $5040$,
is a sum of at most $840$ 8th powers of integers.
Without requiring rationality, there are some cubature rules with fewer nodes.
Salihov~\cite{Salikov} proved that the $120$ vertices of the regular polytope in $\RR^4$
give a cubature rule of degree $11$ on $\SS^3$.
By Corollary~\ref{cor:cuba-SSd1},
this is reduced to a cubature of index $10$ with $60$ nodes, which
can be viewed as an index-eight cubature rule.  
On the other hand, the $4$-dimensional cubature rule of degree $9$ given in Subsection~5.1
has $136$ nodes. This cubature is centrally symmetric and so
can be reduced to a $67$-point cubature rule of index $8$.
We do not know other examples of index-eight cubature rules on $\SS^3$
with fewer nodes than Hurwitz's rule.
Few explicit constructions of spherical cubature rules
with few nodes are known even for small $d$.

(ii) 
Shatalov~\cite[p.~179]{Shatalov} found
a cubature rule of index $8$ on $\SS^4$ with $300$ nodes.
To obtain this cubature rule,
she developed a recursive construction~\cite[Theorem~4.4.9]{Shatalov}
to push up the dimension of a given cubature rule.
The $300$-node cubature rule comes from Salihov's cubature rule on $\SS^3$ with $60$ nodes.
The $5$-dimensional cubature rule given in Subsection 5.1 
has $312$ nodes, which is however centrally symmetric and, therefore, can be reduced to a 
$156$-node cubature rule by Corollary~\ref{cor:cuba-SSd1}. Thus, our cubature rule has 
fewer nodes than Shatalov's cubature rule.
By Corollary~\ref{cor:cuba-SSd1},
the cubature rule on $\SS^4$ given in
Subsection 5.2
can be reduced a $161$-node cubature rule, which
is also smaller than Shatalov's cubature.
This cubature rule also has rational coefficients and nodes and gives the identity
\begin{align*}
&
161280 \Big( \sum_{i=1}^5 x_i^2 \Big)^4
\\
& =
59 \sum_{16} (x_1 \pm x_2 \pm x_3 \pm x_4 \pm x_5)^8
+ 52 \sum_5 (2x_i)^8
+ 16 \sum_{20} (2 x_i \pm 2 x_j)^8 \nonumber \\
& \qquad + 4 \sum_{40} (2 x_i \pm 2 x_j \pm 2 x_k)^8
+ 28561 \sum_{80} (3 x_1 \pm x_2 \pm x_3 \pm x_4 \pm x_5)^8.
\end{align*}
We also note that
the $5$-dimensional cubature rules given in Subsection 5.3
can be reduced to a $196$-node and a $201$-node cubature rule, which
are smaller than Shatalov's rule.

(iii) Lyubich and Vaserstein~\cite[Theorem~2.8]{Lyubich-Vaserstein} proved that
there exists a cubature rule of index $2t$ on $\SS^d$ with at most
$\dim \CP_{2t}^{d+1}(\SS^d) = \binom{d+2t}{d}$ nodes.
Applying Victoir's criterion to the cubature rules given in this section
produces cubature rules with fewer nodes than
the cubature of Lyubich and Vaserstein.
For example, let us consider the case $d = 14$.
It is well known (cf.~\cite[Table~12.1]{HSS99}) that
there is an $OA(2^{12}, 14, 2, 8)$, say $A$.
This OA has antipodality, namely,
if $x$ is a row of $A$, then so is $-x$.
This is because the OA is constructed from a cyclic linear code over $\mathbb{F}_2$
by replacing $0, 1 \in \mathbb{F}_2$ by $\pm 1$.
We can easily construct an $OA(2^{13}, 15, 2, 8)$ by defining
the $2^{13} \times 15$ array to be
\begin{equation*}
\left(\begin{array}{cc}
A & {\bf 1} \\
A & {\bf -1} \\
\end{array} \right).
\end{equation*}
This OA is again antipodal.
The existence of a simple $4$-$(15, 5, 2)$ design, namely,
a $4$-$(15, 5, 2)$ design without repeated blocks, is known.
This information is not covered in~\cite{GKL07}.
The authors knew this information through private communication with Reinhard Laue.
By (\ref{eq:Tdes}), the number of blocks is equal to $546$.
Thus, applying Proposition~\ref{prop:cuba-Victoir} to the
$OA(2^{13}, 15, 2, 8)$ and $4$-$(15, 5, 2)$ design,
the $620414$-node cubature given in Class 5.1.2 can be reduced to
a cubature rule of degree $9$ on $\SS^{14}$ with $148574$ nodes, or
a cubature rule of index $8$ on $\SS^{14}$ with $74287$ nodes. 
When $d = 14$, the cubature of Lyubich and Vaserstein has $319770$ nodes.
Other $15$-dimensional spherical cubature rules can be constructed by
applying Shatalov's recursive construction
to a $14$-dimensional cubature rule with $89964$ nodes found by de la Harpe et al.~\cite{HPV07}.
The nodes of Harpe's cubature rule arise from shells of
the Quebbemann lattice, and therefore are antipodal.
Hence by Corollary~\ref{cor:cuba-SSd1},
this cubature rule can be reduced to a half-node cubature, to which
we apply Shatalov's recursion.
The resulting cubature has $224910$ nodes, which
is bigger than our $74287$-node cubature rule.
We do not known other explicit constructions of cubature rules of index $8$ on $\SS^{14}$
with at most $74287$ nodes.

It would be interesting to study in detail the geometric configuration of
points of cubature given in this section, and
compare with cubature obtained from the lattice approach.
This will be discussed in a forthcoming paper.
\end{rem}

\section{Positive symmetric cubature rules of degree $11$ on the sphere}
\setcounter{equation}{0} 
Using the moments given in \eqref{moment-1/2}, the quantities of $a$ and $A_6$ in
\eqref{aA6} are given by 
\begin{equation} \label{aA6t=5}
            a = 10 \frac{p-1}{d-1} -1, \qquad 
            A_6 = \frac{60 d}{p a (a-1)^2(a+1)(d+9)(d+7)(d+5)(d+3)}.
\end{equation}
The results that we report below are mostly of the type 
$$
         (k_1,k_2,k_3,k_4,k_5)  =  (d+1,1,2,3,m), \quad m = 3,4, ... . 
$$
In other word, they are of the form
\begin{align} \label{CF-degree11}
&
   \s_d    \int_{\SS^d} f(x) d\s(x)  =
\frac{\l_1}{2^{d+1}}  \sum_{x \in 
\left(\tfrac{1}{\sqrt{d+1}}, \ldots,\tfrac{1}{ \sqrt{d+1}}\right)
\CB_{d+1}} f(x)  +  \frac{\l_2}{2(d+1)}
\sum_{x \in
(1,0,\cdots,0)
\CB_{d+1} } f(x)  \notag \\
&
\qquad
+ \frac{\l_3}{ 4\binom{d+1}{2}} \sum_{x \in 
\left (\tfrac{1}{\sqrt{2}}, \tfrac{1}{\sqrt{2}}, 0,\ldots,0\right)
\CB_{d+1} } f(x)
        + \frac{\l_4}{ 8 \binom{d+1}{3}}  \sum_{x \in 
\left (\tfrac{1}{\sqrt{3}}, \tfrac{1}{\sqrt{3}},  \tfrac{1}{\sqrt{3}}, 0,\ldots,0\right)
\CB_{d+1}} f(x)
         \notag  \\ 
&
\qquad
+ \frac{\l_5}{ 2^m \binom{d+1}{m}} \sum_{x \in 
\left( \tfrac{1}{\sqrt{m}}, \ldots,  \tfrac{1}{\sqrt{m}}, 0,\ldots,0 \right)
\CB_{d+1}} f(x)
\notag \\
& \qquad
    + \frac{\l_6}{2^{p+1}(d+1)\binom{d}{p}} \sum_{x \in 
\left( \sqrt{\tfrac{a}{a+p}},\tfrac{1}{\sqrt{a+p}},  \ldots, \tfrac{1}{\sqrt{a+p}}, 0,\ldots,0\right)
\CB_{d+1}} f(x),  \notag 
\end{align}
where we have normalized the nodes so that they are points on the sphere and normalized our 
cubature coefficients by 
$$
  \l_1 = A_1 (d+1)^5, \, \l_2 = A_2, \,  \l_3 = 2^5 A_3, \,   \l_4 = 3^5 A_4, \, \l_5 = m^5 A_5, \, \l_5 = (a+p)^5 A_6
$$
and
$A_i$ satisfy \eqref{system5}, so that $\sum_{k=1}^6 \l_k =1$, from which the cubature rules of degree type 
on the sphere can be deduced easily.

The case of $d = 2p -1$ is of particular interest, since in this case $a = 4$ and the solution is rational. 

\medskip\noindent
{\bf Case $(d+1,1,2,3,m)$}.
We found positive cubature rules for $3 \le d \le 23$.  We specify the parameters $d, p , m$. The solutions turned out 
to be all around $d = 2 p -1$. Those with odd dimensions are 
\begin{align*}
  (d,p,m) =& (3,2,4), (5,3,4),(7,4,4), (9, 5, 5), (11, 6, 5), (13, 7, 5),\\ 
                 & (15, 8, 5), (17, 9, 5), (19, 10, 5), (21, 11, 5),  (23,12,4),
\end{align*}
which are all rational solutions. Those with even dimensions are 
\begin{align*}
   (d, p, m) =& (4,3,4),  (6,4,4), (8, 5, 5), (10, 6, 5), (12, 7, 5),\\
                    & (14, 8, 6), (16, 9, 6), (18, 10, 6), (20, 11, 6), (22, 12, 6). 
\end{align*}

Among these solutions, we single out the following ones because they have at least one coefficient zero. In each
case we give the coefficients explicitly. 

\medskip\noindent
{\it Case $(d,p,m) = (3,2,4)$}. 
$$
 \l_1 = \frac{2}{15}, \quad   \l_2 =\frac{1}{15},  \quad   \l_3 =\frac{1}{8},  \quad  \l_4 = 0,  \quad   \l_5 =0,  \quad 
    \l_6 = \frac{27}{40}.
$$

\medskip\noindent
{\it Case $(d,p,m) = (7,4,4)$}. 
$$
 \l_1 = \frac{32}{315}, \quad   \l_2 =\frac{1}{105},  \quad   \l_3 =\frac{2}{45},  \quad  \l_4 = 0,  \quad   \l_5 = \frac{2}{15},  \quad 
    \l_6 = \frac{32}{45}.
$$

\medskip\noindent
{\it Case $(d,p,m) = (21,11,5)$}. 
$$
 \l_1 = \frac{14641}{705432}, \,   \l_2 =\frac{61}{192192},  \,  \l_3 =0,   \,  \l_4 = \frac{405}{31616},
   \,   \l_5 = \frac{2625}{1414415},  \, 
    \l_6 = \frac{16875}{18304}.
$$

\medskip\noindent
{\it Case $(d,p,m) = (23,12,4)$}. This is a particularly interesting case, since three coefficients are zero. 
$$
 \l_1 =0, \quad   \l_2 =\frac{1}{4095},  \quad   \l_3 =0,  \quad  \l_4 = 0,  \quad   \l_5 = \frac{506}{12285},  \quad 
    \l_6 = \frac{11776}{12285}.
$$

\begin{rem}
(i) By \eqref{eq:iso}, our solution in the case $(d,m,p) = (3,2,4)$ above leads to the identity 
\begin{align*}
22680
\Big( \sum_4 x_i^2 \Big)^5
&= 9 \sum_4 (2x_i)^{10} + 180 \sum_{12} (x_i \pm x_j)^{10} + \sum_{48} (2x_i \pm x_j \pm x_k)^{10} \\
& \qquad + \sum_{48} (2x_i \pm x_j \pm x_k)^{10} + 9 \sum_8 (x_1 \pm x_2 \pm x_3 \pm x_4)^{10}.
\end{align*}
This identity was found by I.~Schur~\cite[p.~721]{Dickson} in connection with the Waring problem.
It is interesting to note that the coefficients of the cubature rule of the case $(d,p,m) = (3,2,4)$
are exactly the same as the case $(d,p,m) = (3,3,2)$ in Section 6.
Namely, Schur's identity implies Hurwitz's identity, though
they are different from each other as representations; see Remark~\ref{rem:Hurwitz} (i).
Schur's identity was given in a paper by Landau in 1908 who
did not mention its connection to Hurwitz's identity. 

(ii) Harpe et al.~\cite{HPV07} constructed
several even-dimensional cubature rules of index $10$ on spheres,
using extremal $\ell$-modular lattices.
Under some conditions,
shells of such lattices naturally give
spherical cubature rules with special degrees.
For example,
according to Bachoc-Venkov~\cite[Corollat 4.1]{Bachoc-Venkov},
shells of extremal $3$-modular lattices in $\RR^d$ form a cubature rule of degree at least $5$
if $d \equiv 0, 2 \pmod{12}$.
Harpe et al. did not discuss
the explicit constructions of cubature rules of index $10$
in odd dimensions, as well as in some even dimensions up to $24$.
Whereas, in this section,
we directly constructed several cubature rules
in those dimensional spaces (e.x. $\RR^{18}$).
Of course,  the lattice approach may still work well in those dimensional spaces.
For example, 
collecting suitable shells of an extremal $3$-modular lattice in $\RR^{18}$
(such a lattice actually exists), we may still be able to obtain cubature rules of index $10$ on $\SS^{17}$.
\end{rem}

\end{document}